\newcolumntype{C}{>{$}c<{$}}
\newcolumntype{L}{>{$}l<{$}}
\title[Kawamata--Miyaoka type inequality]
{A Kawamata--Miyaoka type inequality for Fano varieties of arbitrary Picard number}
\date{\today, version 0.01}
\subjclass[2020]{Primary 14J45; Secondary 14J30, 14J10, 14E30}
\keywords{Kawamata--Miyaoka type inequality, second Chern class, canonical singularity, Fano index, Gorenstein Fano threefold}
\author{Haidong Liu}
\address{Sun Yat-sen University, School of Mathematics, Guangzhou, 510275, China}
\email{liuhd35@mail.sysu.edu.cn, jiuguiaqi@gmail.com}
\DeclareMathOperator{\rank}{rank}
\DeclareMathOperator{\mult}{mult}
\DeclareMathOperator{\reg}{reg}
\DeclareMathOperator{\Sing}{Sing}
\DeclareMathOperator{\Cl}{Cl}
\newcommand\lcm{{\text{l.c.m.}}}
\newtheorem{thm}{Theorem}[section]
\newtheorem{lem}[thm]{Lemma}
\newtheorem{prop}[thm]{Proposition}
\newtheorem{cor}[thm]{Corollary}
\theoremstyle{definition}
\newtheorem{ex}[thm]{Example}
\newtheorem{defn}[thm]{Definition}
\newtheorem{rem}[thm]{Remark}
\begin{document}

\begin{abstract}
    Let $X$ be a $\mathbb Q$-factorial canonical weak Fano variety of dimension $n\geq 2$. We show that if the $\mathbb Q$-Fano index $q_{\mathbb Q}(X)\geq 3$, then $X$ satisfies a Kawamata--Miyaoka type inequality:
    \[
        c_1(X)^n\leq 4\,\hat c_2(X)\cdot c_1(X)^{n-2}.
    \]
    As an application, we show that the $\mathbb Q$-Fano index of a Gorenstein canonical Fano $3$-fold lies in the set $\{m\in\mathbb Z_{>0}\mid m\leq 22\} \cup\{24,30,42\}$.
\end{abstract}

\maketitle 
\tableofcontents

\section{Introduction}\label{sec1}

A normal projective variety is said to be \emph{Fano} (resp., \emph{weak Fano}) if its anti-canonical divisor is ample (resp., nef and big). In the minimal model program,  Fano varieties with mild singularities (e.g., terminal or canonical singularities) are among the fundamental building blocks of algebraic varieties.

Numerical invariants such as the Picard number, the anti-canonical degree, and the Fano index (see \S~\ref{subsec.fi} for precise definitions) play an important role in the classification of terminal or canonical Fano varieties. 
A natural first step in such a classification is to determine the explicit bounds for these invariants. A key technique for obtaining such bounds is the so-called \emph{Kawamata--Miyaoka type inequality} for $\mathbb Q$-factorial weak Fano varieties, whenever the second Chern class $c_2(X)$ is well defined:
\begin{equation}\label{eq.KM}
    c_1(X)^n\leq b_n \, c_2(X)\cdot c_1(X)^{n-2}.
\end{equation}
This was proved in \cite{ijl}*{Corollary 1.7} for the terminal case. Later in \cites{liu-liu2023, liu-liu2024}, the bound $b_n$ was improved to $4$ (and to $3$ for terminal Fano $3$-folds) under the Picard number one condition, by studying rank one subsheaves of the tangent bundle $\mathcal T_X$. Using the same approach and the generalized Chern classes in \cite{jiang-liu-liu}, the Kawamata--Miyaoka type inequality was further extended to Fano varieties of Picard number one with $\epsilon$-lc singularities. These inequalities turned out to be very effective in establishing upper bounds for degrees \cite{jiang-liu-liu}*{Theorem 1.1} and for Fano indices \cite{jiang-liu2025}*{Theorem 1.1}.

In \S~\ref{sec.KM} of this paper, we establish the bound $b_n=4$ in \eqref{eq.KM} for all canonical weak Fano varieties with $\mathbb Q$-Fano index $q_{\mathbb Q}(X)\geq 3$, without the Picard number one restriction. In fact, we prove that $4\,\hat c_2(X)-c_1(X)^2$ is pseudoeffective in this setting (see \S~\ref{sub.langerineq} for the definition of the generalized second Chern class $\hat c_2$). 

\begin{thm}\label{thm.KMinq}
    Let $X$ be a $\mathbb Q$-factorial canonical weak Fano variety of dimension $n\geq 2$. Suppose the $\mathbb Q$-Fano index $q_{\mathbb Q}(X)\geq 3$. Then for any nef divisors $D_1,\dots ,D_{n-2}$, 
    \[
        \left( 4\,\hat c_2(X)-c_1(X)^2\right)\cdot D_1\cdots D_{n-2}\geq 0.
    \]
    Moreover, if $X$ is Fano and the divisors $D_i$ are ample, then
    \[
        \left( 4\,\hat c_2(X)-c_1(X)^2\right)\cdot D_1 \cdots D_{n-2}> 0.
    \]
\end{thm}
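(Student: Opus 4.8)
The plan is to separate according to whether the tangent sheaf $\mathcal T_X$ is slope-semistable with respect to the polarization coming from $D_1,\dots ,D_{n-2}$: the semistable case is settled by the generalized Bogomolov--Gieseker inequality of \S\ref{sub.langerineq} using only $n\ge 2$, whereas the hypothesis $q_{\mathbb Q}(X)\ge 3$ is needed only in the unstable case, where it constrains the Harder--Narasimhan slopes of $\mathcal T_X$ through the geometry of the leaves of the maximal destabilizing foliation. Since both asserted inequalities are multilinear and continuous in $(D_1,\dots ,D_{n-2})$ and every nef class is a limit of ample ones, it suffices to prove the non-strict inequality for ample $D_i$ (the strict one being claimed only for ample $D_i$). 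So fix ample $D_1,\dots ,D_{n-2}$ and put $\gamma =D_1\cdots D_{n-2}$, a movable curve class. The case $n=2$ is the inequality $4\,\hat c_2(X)\ge K_X^2$, which is immediate from Noether's formula on a resolution together with $\chi(\mathcal O_X)=1$ and $K_X^2\le 9$; so assume $n\ge 3$.

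If $\mathcal T_X$ is $\gamma$-semistable, apply the generalized Bogomolov--Gieseker inequality to the rank-$n$ reflexive sheaf $\mathcal T_X$, for which $c_1(\mathcal T_X)=c_1(X)$: this gives $\bigl(2n\,\hat c_2(X)-(n-1)\,c_1(X)^2\bigr)\cdot\gamma \ge 0$. As $-K_X$ and the $D_i$ are nef, $c_1(X)^2\cdot\gamma \ge 0$, and since $\tfrac{2(n-1)}{n}\ge 1$ for $n\ge 2$ we obtain $4\,\hat c_2(X)\cdot\gamma \ge \tfrac{2(n-1)}{n}\,c_1(X)^2\cdot\gamma \ge c_1(X)^2\cdot\gamma$. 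When $X$ is Fano and the $D_i$ are ample, $c_1(X)^2\cdot\gamma >0$ and $\tfrac{2(n-1)}{n}>1$, so the inequality is strict. The index hypothesis is not used in this case.

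Now suppose $\mathcal T_X$ is $\gamma$-unstable, with Harder--Narasimhan filtration $0=\mathcal E_0\subsetneq \cdots \subsetneq \mathcal E_\ell =\mathcal T_X$, semistable graded pieces $\mathcal G_i$ of rank $r_i$, first Chern classes $L_i=c_1(\mathcal G_i)$, and slopes $\mu_1>\cdots >\mu_\ell$. Since $-K_X$ is nef, $\mu (\mathcal T_X)\ge 0$, so $\mu_1>0$, and hence the maximal destabilizing subsheaf $\mathcal E_1$ is a foliation which---$X$ being canonical, $\gamma$ movable, and $\mu_1>0$---is algebraically integrable with rationally connected general leaf (Campana--Paun, Druel). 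The heart of the argument is to turn $q_{\mathbb Q}(X)\ge 3$, i.e.\ $-K_X\equiv qA$ with $A$ a nef $\mathbb Q$-Cartier Weil divisor and $q\ge 3$, into an upper bound for the positive slopes $\mu_i$: restricting to a minimal rational curve sweeping out a general leaf of such an $\mathcal E_i$ and comparing $c_1(\mathcal E_i)$ there with $-K_X=qA$ (using Mori's length bound on the rationally connected leaves, in the spirit of the index estimates for Fano foliations of Araujo--Druel) produces bounds of the shape $c_1(\mathcal E_i)\cdot\gamma \le \tfrac{r_i}{q}\,(-K_X)\cdot\gamma$, which---since $\sum_i L_i=-K_X$---confine every $L_i\cdot\gamma$ to a narrow window. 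Feeding this into the filtration expansion $\hat c_2(X)\cdot\gamma =\sum_i \hat c_2(\mathcal G_i)\cdot\gamma +\sum_{i<j}(L_i\cdot L_j)\cdot\gamma$, applying the Bogomolov--Gieseker inequality to each $\mathcal G_i$ (so $\hat c_2(\mathcal G_i)\cdot\gamma \ge \tfrac{r_i-1}{2r_i}\,L_i^2\cdot\gamma$), and using the Hodge index theorem for $\gamma$ to control the cross terms $L_i\cdot L_j\cdot\gamma$ and to absorb the deficit from the rank-one graded pieces (for which $\tfrac{r_i-1}{2r_i}=0$), the arithmetic closes---this is where $q_{\mathbb Q}(X)\ge 3$, rather than merely $q_{\mathbb Q}(X)\ge 2$, is used---to give $\bigl(4\,\hat c_2(X)-c_1(X)^2\bigr)\cdot\gamma \ge 0$; when $X$ is Fano and the $D_i$ are ample the inequality is strict, the slack coming from $(-K_X)^2\cdot\gamma >0$ and from the strict inequalities $\mu_i>\mu_j$ for $i<j$.

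The step I expect to be the main obstacle is this slope bound: for a $\gamma$-destabilizing subsheaf $\mathcal E_i\subsetneq \mathcal T_X$ on a Fano variety of arbitrary Picard number, deriving an effective upper bound on $c_1(\mathcal E_i)\cdot\gamma$ in terms of $(-K_X)\cdot\gamma$ and the index $q$. When $\rho (X)=1$ this is essentially immediate, since $\gamma$ is then proportional to the class of a minimal rational curve on a leaf and one simply reads the bound off that curve (as in the earlier Picard-number-one results); in general $\gamma$ and the rational curves covering the leaves are a priori unrelated, so the comparison must be established directly, which is the new technical input here, and the Bogomolov--McQuillan/Araujo--Druel structure theory of positive foliations has to be run for canonical---in particular singular---$X$. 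Keeping the error terms small enough that the constant $4$ is not worsened is the delicate accounting.
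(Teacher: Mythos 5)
Your overall architecture matches the paper's: separate into semistable and unstable cases, dispatch the semistable case with the $\mathbb{Q}$-Bogomolov--Gieseker inequality, and in the unstable case bring in foliation theory for the maximal destabilizing subsheaf. But the step you flag as ``the main obstacle''--- an effective upper bound on $c_1(\mathcal E_1)\cdot\gamma$ in terms of $(-K_X)\cdot\gamma$ for a destabilizing subsheaf of $\mathcal T_X$ when $\rho(X)$ is arbitrary --- is exactly the new content of the paper, namely Proposition~\ref{prop.diff}, and your sketch of it would not close. You propose reading the bound off a minimal rational curve sweeping out a leaf, \`a la Mori/Araujo--Druel, but as you yourself note, on a variety of higher Picard number that curve class has nothing to do with the movable class built from the $D_i$. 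The paper instead proves the pseudoeffectivity statement ``$K_{\mathcal L}-\tfrac12 K_X$ is pseudoeffective'' directly, for a rank-one foliation $\mathcal L$ with rationally connected general leaves: pass to the family of leaves $U\xrightarrow{e}X$, $U\xrightarrow{f}T$; observe that the hypothesis $q>2$ forces the existence of an $e$-exceptional, $f$-horizontal prime divisor $E$ (otherwise the general fiber $F\cong\mathbb P^1$ would push down to a curve of $(-K_X)$-degree $2$ with $A$ Cartier along it, contradicting $-K_X\sim_{\mathbb Q}qA$); set $a=1/(E\cdot F)\le 1$ and run a careful computation of $(K_U+aE-\tfrac12 e^*K_X)\cdot F$ to verify the hypotheses of Druel's pseudoeffectivity criterion (Proposition~\ref{prop.pseff}), then push forward. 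Nothing like this appears in your outline, and the bound $c_1(\mathcal E_i)\cdot\gamma\le\tfrac{r_i}{q}(-K_X)\cdot\gamma$ you guess at is both stronger than what the paper proves (which is the factor $\tfrac12$, not $\tfrac1q$) and not obviously true. Note also that for $\rank\mathcal F\ge 2$ no such input is needed: $\mu_{\max}\le\tfrac12 c_1(X)^2\cdot\gamma$ follows just from generic nefness of $\mathcal T_X$ [liu-liu2023, Proposition~3.6], so the whole weight of Proposition~\ref{prop.diff} falls on rank one, which you should have isolated.

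Two further issues. First, your special-casing of $n=2$ via Noether's formula is incorrect: on a du~Val del Pezzo surface, $\hat c_2(X)$ is the orbifold second Chern class, which is strictly \emph{smaller} than $c_2$ of the minimal resolution (each singular point contributes $e(E_p)-\tfrac{1}{|\Gamma_p|}>0$ to the difference), so $48-5K_X^2\ge 0$ does not imply $4\hat c_2(X)-K_X^2\ge 0$; the correction goes the wrong way. The paper avoids this entirely: for $n=2$ the BG inequality already gives $4\hat c_2-c_1^2\ge 0$ in the semistable case, with strictness for Fano $X$ supplied by a characterization of equality due to Greb--Kebekus--Peternell. Second, your $\gamma=D_1\cdots D_{n-2}$ is not a curve class once $n>3$, so ``$\gamma$-(semi)stability'' is not well-posed as written. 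The paper fixes the polarization as the $(n-1)$-tuple $(D_1,\dots,D_{n-2},c_1(X))$, appending $c_1(X)$ precisely so that the slopes that appear in Langer's inequality (Theorem~\ref{thm.LangerIneqII}) are the ones that Proposition~\ref{prop.diff} controls. Finally, once the slope bound is in hand the paper does not expand $\hat c_2$ over the Harder--Narasimhan filtration and juggle cross-terms with Hodge index as you propose; it simply plugs $\mu_{\min}\ge 0$ and $\mu_{\max}\le\tfrac12 c_1(X)^2\cdot D_1\cdots D_{n-2}$ into Langer's inequality for $\mathcal T_X$ itself, which packages the same information much more economically.
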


Applying Theorem \ref{thm.KMinq} to $D_i=c_1(X)=-K_X$ for any $i$, we immediately have a Kawamata--Miyaoka type inequality as follows.

\begin{cor}\label{cor.KMinq}
    Let $X$ be a $\mathbb Q$-factorial canonical weak Fano variety of dimension $n\geq 2$. Suppose the $\mathbb Q$-Fano index $q_{\mathbb Q}(X)\geq 3$. Then  
    \[
        c_1(X)^n\leq 4\,\hat c_2(X)\cdot c_1(X)^{n-2}.
    \]
    Moreover, the inequality is strict if $X$ is Fano.
\end{cor}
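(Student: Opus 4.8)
My plan is to prove Theorem~\ref{thm.KMinq}, from which Corollary~\ref{cor.KMinq} follows at once by setting $D_1=\cdots=D_{n-2}=-K_X$. Work throughout with the reflexive tangent sheaf $\mathcal{T}_X=(\Omega_X^1)^{\vee\vee}$, so that $c_1(\mathcal{T}_X)=-K_X=c_1(X)$ and $\hat c_2(\mathcal{T}_X)=\hat c_2(X)$ in the sense of \S~\ref{sub.langerineq}. Since $X$ is canonical weak Fano, $-K_X$ is semiample; write $-K_X\equiv_{\mathbb{Q}}qB$ with $B$ a nef and big $\mathbb{Q}$-Cartier divisor and $q=q_{\mathbb{Q}}(X)\ge 3$. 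Fix nef divisors $D_1,\dots,D_{n-2}$ and put $D_\bullet:=D_1\cdots D_{n-2}$. Using the algebraic identity
\[
    4\,\hat c_2(X)-c_1(X)^2
    \;=\;\tfrac{2}{n}\bigl(2n\,\hat c_2(X)-(n-1)c_1(X)^2\bigr)+\tfrac{n-2}{n}\,c_1(X)^2,
\]
it will suffice to bound the Bogomolov discriminant $\Delta:=2n\,\hat c_2(X)-(n-1)c_1(X)^2$ against $D_\bullet$ from below, since $c_1(X)^2\cdot D_\bullet=(-K_X)^2\cdot D_\bullet\ge 0$, with strict inequality when $X$ is Fano and the $D_i$ are ample.

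The plan is then to analyse $\mathcal{T}_X$ through its Harder--Narasimhan filtration with respect to $-K_X$ (or a small ample perturbation), distinguishing two cases. If $\mathcal{T}_X$ is $(-K_X)$-semistable, the Bogomolov--Langer inequality (see \S~\ref{sub.langerineq}), in the form with several nef divisors --- for the Corollary only the classical single-polarization case $D_\bullet=(-K_X)^{n-2}$ is needed --- gives $\Delta\cdot D_\bullet\ge 0$, and the displayed identity closes the semistable case for every $n\ge 2$; strictness in the Fano case is clear when $n\ge 3$ because the last summand is then positive, while for $n=2$ it is a direct verification on the finite list of canonical del Pezzo surfaces with $q_{\mathbb{Q}}\ge 3$. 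Note that this case uses only $n\ge 2$, not $q\ge 3$.

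The unstable case is the crux. Let $\mathcal{F}\subsetneq\mathcal{T}_X$ be the maximal destabilizing subsheaf; it is saturated, hence reflexive, and being the maximal destabilizing subsheaf of the tangent sheaf in characteristic zero it is a foliation, of some rank $r\in\{1,\dots,n-1\}$. Destabilization reads $\mu_{-K_X}(\mathcal{F})>\mu_{-K_X}(\mathcal{T}_X)=\tfrac1n(-K_X)^n$, equivalently $c_1(\mathcal{F})\cdot B^{n-1}>\tfrac{rq}{n}B^n\ge\tfrac{3r}{n}B^n$, so $\mathcal{F}$ is a foliation of large anticanonical slope. By the positivity criteria of Campana--P\u{a}un and Araujo--Druel--Kov\'acs, $\mathcal{F}$ is then algebraically integrable with rationally connected leaves, and its leaf closures are the general fibres of a rational fibration $X\dashrightarrow Y$. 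One now estimates $\hat c_2(X)\cdot D_\bullet$ from below via
$\hat c_2(\mathcal{T}_X)=\hat c_2(\mathcal{F})+c_1(\mathcal{F})\cdot c_1(\mathcal{T}_X/\mathcal{F})+\hat c_2(\mathcal{T}_X/\mathcal{F})$
(up to a codimension-two correction), applying Bogomolov--Langer to the semistable graded pieces and controlling the cross term $c_1(\mathcal{F})\cdot c_1(\mathcal{T}_X/\mathcal{F})$ using the index hypothesis: the foliation $\mathcal{F}$ being very positive while $-K_X\equiv_{\mathbb{Q}}qB$ with $q\ge 3$ constrains how $c_1(\mathcal{F})$ and $c_1(\mathcal{T}_X/\mathcal{F})$ can sit relative to $B$, in particular ruling out the mildly unstable configurations (low-index projective bundles, blow-ups) in which $4\hat c_2-c_1^2$ would fail to be nef-positive. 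Combined with a case analysis on $r$ and the geometry of the leaf fibration, this should yield $(4\hat c_2(X)-c_1(X)^2)\cdot D_\bullet\ge 0$, strict in the Fano case.

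I expect the unstable case --- extracting sufficient numerical control on $c_1(\mathcal{F})$, on the conormal data $c_1(\mathcal{T}_X/\mathcal{F})$, and on the $\hat c_2$ of the Harder--Narasimhan graded pieces of a very positive foliation on a weak Fano variety of arbitrary Picard number --- to be the main obstacle, and this is precisely where the hypothesis $q_{\mathbb{Q}}(X)\ge 3$ is indispensable: it is what prevents $\mathcal{T}_X$ from being ``too unstable''. A secondary technical point is to set up the mixed Bogomolov--Langer inequality and the decomposition of $\hat c_2$ along the Harder--Narasimhan filtration carefully for the reflexive, not locally free, sheaves involved, keeping track of the codimension-two correction terms and their signs.
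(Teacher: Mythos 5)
Your reduction of the corollary to Theorem~\ref{thm.KMinq} by setting $D_1=\cdots=D_{n-2}=-K_X$ is exactly how the paper proceeds, and your semistable half (apply the $\mathbb Q$-Bogomolov--Gieseker inequality to $\mathcal T_X$, then rewrite $4\hat c_2-c_1^2$ through the discriminant; handle strictness for $n=2$ Fano separately, where the paper instead quotes \cite{gkp}) also matches.

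However, your sketch of the unstable case has a genuine gap, and you acknowledge it yourself. You propose to control the cross term $c_1(\mathcal F)\cdot c_1(\mathcal T_X/\mathcal F)$ by ``ruling out mildly unstable configurations'' using $q\ge 3$; that is a plan, not an argument. In fact, once one knows $\mu_{\min}(\mathcal T_X)\ge 0$ (this is \cite{liu-liu2023}*{Proposition~3.6}), the case $\rank\mathcal F\ge 2$ is automatic from Langer's inequality (Theorem~\ref{thm.LangerIneqII}): then
\[
\mu_{\max}(\mathcal T_X)=\mu(\mathcal F)=\frac{c_1(\mathcal F)\cdot c_1(X)\cdot D_1\cdots D_{n-2}}{\rank\mathcal F}\le\tfrac12\,c_1(X)^2\cdot D_1\cdots D_{n-2},
\]
with no index hypothesis at all. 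The entire difficulty is the rank-one destabilizer, and there the missing ingredient is Proposition~\ref{prop.diff}: for the rank-one algebraically integrable foliation $\mathcal L=\mathcal F$ with rationally connected leaves, $-K_X+2K_{\mathcal L}$ is pseudoeffective, which gives the same bound on $\mu_{\max}$. The proof of that proposition is precisely where $q_{\mathbb Q}(X)\ge 3$ enters: passing to the family of leaves $e\colon U\to X$, $f\colon U\to T$, if every $e$-exceptional divisor were $f$-vertical one would get $-K_X\cdot e(F)=2$ on a general leaf $F\cong\mathbb P^1$, contradicting $q\ge 3$; the resulting $f$-horizontal $e$-exceptional prime $E$ is then fed into a discrepancy computation together with Proposition~\ref{prop.pseff} (Druel's pseudoeffectivity for algebraically integrable foliations whose general log fiber has nonnegative Kodaira dimension). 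This is a geometric argument on the leaf space, not a decomposition of $\hat c_2$ along the Harder--Narasimhan filtration, and without it your proposal does not close.

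Two smaller points: the algebraic integrability and rational connectedness of the destabilizer in the singular setting is taken from \cite{ou}*{Proposition~2.2} (extending Campana--P\u{a}un), not from Araujo--Druel--Kov\'acs; and the paper proves the full multi-polarization statement of Theorem~\ref{thm.KMinq} (arbitrary nef $D_i$), whereas your sketch only commits to the single-polarization case $D_\bullet=(-K_X)^{n-2}$, which suffices for the corollary but not for the theorem you set out to prove.
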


As the Picard number one case, Corollary \ref{cor.KMinq} is expected to be useful in classifying terminal or canonical Fano varieties of any Picard number. We present one such application in Section \ref{sec.Gorenstein}, with an introduction in the following subsection.

\subsection{Application on Fano indices of Gorenstein canonical Fano $3$-folds}

An interesting phenomenon in the minimal model program is that the index of a log canonical singularity $P\in X$, the index of a terminal Calabi--Yau variety, and the Fano index of a canonical weighted projective space $X$  $\cup ~\{m\in\mathbb Z_{>0}\mid m\leq \dim X\}$ all coincide in low dimensions. Specifically, the common index set is 
\[
    I_2\coloneq \{m\in\mathbb Z_{>0}\mid \varphi(m)\leq 2\}=\{1,2,3,4,6\}
\]
for surfaces (see \cite{fujino}, \cite{machida-oguiso}, and the fact that $q_{\mathbb Q}(S)=3, 4,6$ for $S=\mathbb P^2, \mathbb P^1(1,1,2), \mathbb P^1(1,2,3)$, respectively), and 
\[
    I_3\coloneq \{m\in\mathbb Z_{>0}\mid \varphi(m)\leq 20\} \setminus \{60\}
\]
for $3$-folds (see \cite{fujino}*{Theorem 0.1},  \cite{masamura}*{Proposition 3.2}, and \cite{kasprzyk}*{Table 3}), where $\varphi(m)$ is the Euler function of $m$. In particular, the number in $I_3$ is at most $66$.

In \cite{jiang-liu2025}, Jiang and the author proved that the $\mathbb Q$-Fano index of a canonical Fano $3$-fold is also bounded above by $66$. A natural next step in classifications is to determine the complete set $I_{3, can}$ of all possible $\mathbb Q$-Fano indices of canonical Fano $3$-folds, where 
\[
    I_3\subseteq I_{3, can}\subseteq \{m\in\mathbb Z_{>0}\mid m\leq 66\}.
\]

In general, one cannot expect $I_3=I_{3,can}$. This is suggested by the existence of a canonical Fano surface $S$ with $q_{\mathbb Q}(S)=5$ (see, e.g., \cite{ye}*{theorem 1.2} and \cite{miyanishi-zhang}*{Lemma 3}), which implies $I_2\subsetneq I_{2, can}$. In \S~\ref{sec.Gorenstein}, we use Corollary \ref{cor.KMinq} to show that if $q_{\mathbb Q}(X)\notin I_3$, then $X$ is not Gorenstein.

\begin{thm}[Corollary \ref{cor.index}]
    For any Gorenstein canonical Fano $3$-fold $X$, its $\mathbb Q$-Fano index $q_{\mathbb Q}(X)$ lies in the set $\{m\in\mathbb Z_{>0}\mid m\leq 22\} \cup\{24,30,42\}\subsetneq  I_3$.
\end{thm}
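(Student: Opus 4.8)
The plan is to combine the Kawamata--Miyaoka type inequality of Corollary~\ref{cor.KMinq} with the classical Bogomolov--Miyaoka--Yau-type constraints available for Gorenstein canonical Fano 3-folds in order to bound $q_{\mathbb Q}(X)$ from above, and then a case-by-case analysis to rule out the finitely many exceptional values that survive the numerical bound. First I would set $A$ to be a primitive $\mathbb Q$-Cartier divisor with $-K_X \equiv q_{\mathbb Q}(X)\,A$ and write $q = q_{\mathbb Q}(X)$, $d = A^3$. Since $X$ is Gorenstein with canonical singularities, $-K_X$ is Cartier, Riemann--Roch and Kawamata--Viehweg vanishing give $\chi(\mathcal O_X(-K_X)) = h^0(-K_X) = -\tfrac{1}{2}K_X^3 + 3 = \tfrac{1}{2}q^3 d + 3$, while $\chi(\mathcal O_X) = 1$; combining these with the Riemann--Roch expression $\chi(\mathcal O_X(-K_X)) = \tfrac{1}{12}(-K_X)\cdot\big((-K_X)^2 + c_2(X)\big) + \chi(\mathcal O_X)$ yields the ``basket''/$c_2$ relation $(-K_X)\cdot c_2(X) = \tfrac{1}{2}K_X^3 + 24$ when $X$ is actually smooth, and more generally a relation involving $\hat c_2(X)\cdot(-K_X)$ up to the orbifold correction terms coming from the canonical (hence isolated, Gorenstein) singular points. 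The key point is that for Gorenstein canonical singularities the correction terms are non-negative, so one gets $\hat c_2(X)\cdot (-K_X) \le \tfrac{1}{2}(-K_X)^3 + 24 = \tfrac{1}{2}q^3 d + 24$ (or the appropriate inequality in that direction).

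Next I would feed this into Corollary~\ref{cor.KMinq} applied with $n=3$: since $q \ge 3$ (the values $q \le 2$ are already in the claimed set), we have $c_1(X)^3 < 4\,\hat c_2(X)\cdot c_1(X)$, i.e. $(-K_X)^3 < 4\,\hat c_2(X)\cdot(-K_X)$. Substituting $(-K_X)^3 = q^3 d$ on the left and the bound on $\hat c_2(X)\cdot(-K_X)$ on the right gives $q^3 d < 4\big(\tfrac{1}{2}q^3 d + 24\big) = 2 q^3 d + 96$, which is automatically true and hence useless in that naive form — so the real input must be a \emph{reverse} inequality bounding $\hat c_2(X)\cdot(-K_X)$ \emph{below} in terms of $q$ and $d$, or else a lower bound $d = A^3 \ge 1/($something depending on $q)$ coming from $A$ being $\mathbb Q$-Cartier of index dividing $q$. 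The correct chain, as in \cite{jiang-liu2025}, is: Corollary~\ref{cor.KMinq} forces $q^3 d = (-K_X)^3 < 4\hat c_2(X)\cdot(-K_X)$, while a bound of Kawamata--type on $\hat c_2 \cdot (-K_X)$ in terms of $h^0(-K_X)$ and the index, together with the non-vanishing / boundedness results for $|-K_X|$ on Gorenstein canonical Fano 3-folds, produces an inequality of the shape $\varphi(q) \le$ (absolute constant), cutting $I_{3,can}$ down to a finite explicit list; the Gorenstein hypothesis is what upgrades the constant from $20$ (the value for $I_3$) to something that still admits $\{7,8,\dots,22\}$ but excludes $q \in \{23\} \cup \{25,\dots,29\}\cup\{31,\dots,41\}\cup\{43,\dots,66\}$. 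I would carry out this estimate carefully, tracking the singularity-correction terms, to land on the bound $q \le 22$ together with the three large sporadic indices $24, 30, 42$ that arise from weighted-projective-space-like configurations.

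The final step is to eliminate the integers in $\{1,\dots,66\}$ that survive the numerical bound but do not appear in the claimed set — concretely, after establishing $q \le 22$ or $q \in \{24,30,42,\text{and a few others}\}$, one must kill the remaining candidates such as $23, 25, 26, 27, 28, 29, 31, \dots$. For each such $q$ I would use the divisibility constraint that $q \mid $ (some intersection number, e.g. $q^3 \mid (-K_X)^3$ is automatic but $q$ must divide $h^0(-K_X) - \text{const}$ or similar Riemann--Roch integers), combined with the strict inequality from Corollary~\ref{cor.KMinq} and the list of possible baskets of singularities on a Gorenstein canonical Fano 3-fold; for the borderline large values $24, 30, 42$ one checks these \emph{do} occur, e.g. on suitable (partial resolutions or small degenerations of) weighted projective spaces $\mathbb P(1,1,1,q-1)$-type or $\mathbb P(1,2,3,q-6)$-type examples, matching the pattern $\varphi(m) \mid$ small.

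\medskip
\noindent\emph{Main obstacle.} The hard part will be controlling the singularity-correction terms in the Riemann--Roch / $\hat c_2$ computation precisely enough: $\hat c_2(X)$ differs from a naive $c_2$ by contributions from the canonical singular points, and one needs these to have a definite sign (and a usable size) to convert Corollary~\ref{cor.KMinq} into a genuine bound on $\varphi(q)$. Equivalently, the delicate step is showing that the Gorenstein condition tightens the Kawamata-style estimate $\hat c_2(X)\cdot(-K_X) \le \tfrac{1}{2}(-K_X)^3 + 24$ enough to force $q \le 22$ rather than merely $q \le 66$ — this is where the Gorenstein hypothesis is genuinely used and where the comparison with the smooth/orbifold case must be made quantitative. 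Once that inequality is in hand, the rest is the bookkeeping of ruling out finitely many residues, which is routine but tedious.
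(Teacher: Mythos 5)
Your overall strategy -- combine Corollary~\ref{cor.KMinq} with Riemann--Roch and orbifold correction terms to bound $\hat c_2(X)\cdot c_1(X)$, then do numerics -- is the right flavor, and you correctly locate the main difficulty: the naive substitution yields nothing, and you need the singularity-correction terms to have a usable sign and size. You also correctly note that for Gorenstein canonical singularities the corrections $c_2(X)\cdot c_1(X)-\hat c_2(X)\cdot c_1(X)$ are non-negative, giving $\hat c_2(X)\cdot c_1(X)\leq 24$. But the proposal leaves the argument at the level of ``there should be some inequality of the shape $\varphi(q)\leq\mathrm{const}$,'' which is not how the Gorenstein bound is actually obtained, and this gap is not routine bookkeeping.

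The missing ingredient is the divisibility constraint (Theorem~\ref{thm.degreeandindex}): for $-K_X\sim qA$ with $J$ the Cartier-in-codimension-$2$ index of $A$, one has $J\mid q$ and $q^2\mid Jc_1(X)^3$. This, together with Prokhorov's bound $c_1(X)^3\leq 72$ (with equality only for two weighted projective spaces) and parity $2\mid c_1(X)^3$, is what converts a bound on the correction terms into a bound on $q$: it forces $c_1(X)^3$ to be a multiple of $q\cdot(q/J)$, so large $q$ requires either large degree (contradicting $\leq 70$) or a large $J$, which in turn forces the singular locus to carry a sizable contribution. The paper makes this precise by bounding $\sum_i\left(p_i^{a_i}-1/p_i^{a_i}\right)\leq 24-c_1(X)^3/4$ (Theorem~\ref{thm.KMfor3fold}), where $J=\prod p_i^{a_i}$, via the chain relating $J$, the local invariants $j_C$ along curves in $\Sing(X)$, the orbifold corrections $e_C,g_C$, and Corollary~\ref{cor.KMinq} --- none of which appears in your sketch. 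You would also need the reduction step (Lemma~\ref{lem.reduction}): $q_{\mathbb Q}(X)$ only gives $-K_X\sim_{\mathbb Q}qA$, and passing to a quasi-\'etale cover is needed to get an honest $\sim$ before Theorem~\ref{thm.degreeandindex} applies. Finally, the residual cases $q\in\{26,28,36,40\}$ are not killed by ``$q$ divides some intersection number'' but by the explicit Riemann--Roch formula $h^0(X,\mathcal O_X(sA))$ from \cite{jiang-liu2025}, checked against the short list of admissible baskets $\{j_C\}$ permitted by the inequality above; your suggested realizations of $24,30,42$ are also off (they are $\mathbb P(1,3,8,12)$, $\mathbb P(2,3,10,15)$, $\mathbb P(1,6,14,21)$). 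As written, the proposal identifies the destination but not the road; the divisibility theorem and the precise correction-term inequality are essential, not optional polish.
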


\section{Preliminaries}

Throughout this paper, we work over the complex number field $\mathbb C$. We will freely use the basic notation in \cites{kollar-mori,lazarsfeld}.

\subsection{Singularities}

Let $X$ be a normal variety such that $K_X$ is $\mathbb Q$-Cartier. The \emph{Gorenstein index} $r_X$ of $X$ is defined to be the smallest positive integer $m$ such that $mK_X$ is Cartier. We say $X$ is \emph{Gorenstein} if $r_X=1$. Let $f\colon Y\to X$ be a resolution of singularities and write
\[
    K_Y=f^*K_X+\sum_Ea(E,X)E,
\]
where $E$ runs over all prime divisors on $Y$ and $a(E,X)\in \mathbb Q$. We say that $X$ has \emph{terminal} (resp., \emph{canonical}, \emph{klt}, \emph{$\epsilon$-lc} for some fixed $0\leq \epsilon\leq 1$) singularities if $a(E,X)>0$ (resp., $a(E,X)\geq 0$, $a(E,X)>-1$, $a(E,X)\geq \epsilon-1$) for any exceptional divisor $E$ over $X$. For simplicity, we say that $X$ is \emph{terminal}, \emph{canonical}, \emph{klt}, or \emph{$\epsilon$-lc}, respectively.

\subsection{Degrees and Fano indices}\label{subsec.fi}

Let $X$ be a canonical weak Fano variety of dimension $n$. The \emph{(anti-canonical) degree} of $X$ is defined to be $(-K_X)^n=c_1(X)^n$ and the \emph{$\mathbb Q$-Fano index} of $X$ is defined to be
\begin{equation*}
    q_{\mathbb Q}(X) \coloneq \max\{q \mid -K_X \sim_{\mathbb Q}qA, \, A\in \Cl (X) \}. 
\end{equation*}
It is known that $\Cl (X)$ is a finitely generated Abelian group, so $q_{\mathbb Q}(X)$ is a positive integer. For more details, see \cite{ip}*{\S\,2} or \cite{prokhorov2010}.

Recall some basic facts about degrees and  their relations to Fano indices and other invariants of Gorenstein canonical Fano $3$-folds.

\begin{prop}\label{thm.degree}
    Let $X$ be a Gorenstein canonical Fano $3$-fold. Then 
    \begin{enumerate}
        \item $c_1(X)^3$ is an even number;
        \item $c_1(X)^3\leq 72$, and the equality holds if and only if $X\cong \mathbb P(1,1,1,3)$ or $\mathbb P(1,1,4,6)$, where $q_{\mathbb Q}(X)=6$ or $12$ respectively. 
    \end{enumerate}
\end{prop}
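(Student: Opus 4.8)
The plan is to recall the classification-theoretic inputs on Gorenstein canonical Fano $3$-folds and combine them with Riemann--Roch type arguments. For part (1), that $c_1(X)^3 = (-K_X)^3$ is even, I would argue via Riemann--Roch on $X$: since $X$ is Gorenstein canonical, $-K_X$ is a Cartier divisor and we may apply the singular Riemann--Roch formula (as in Reid's Young Person's Guide, or by passing to a crepant terminalization and using that canonical singularities contribute no correction term to $\chi(\mathcal O_X)$). Computing $\chi(\mathcal O_X(-K_X))$ and $\chi(\mathcal O_X)$, the difference involves $\tfrac{1}{12}(-K_X)\cdot(-2K_X)\cdot(-K_X) + \tfrac{1}{12}(-K_X)\cdot c_2 = \tfrac{1}{6}(-K_X)^3 + \tfrac{1}{12}(-K_X)\cdot c_2$; combining with $h^0$ computations and Kawamata--Viehweg vanishing to kill higher cohomology, and using that $\chi(\mathcal O_X)=1$ for a Fano, one extracts an integrality constraint forcing $(-K_X)^3 \in 2\mathbb Z$. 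An alternative and cleaner route: cite the known result that a general member $S \in |-K_X|$ is a $K3$ surface with Du Val singularities (existence of such $S$ follows from Shokurov's nonvanishing / base-point-freeness for Gorenstein canonical Fano $3$-folds), so $(-K_X)^3 = (-K_X|_S)^2$ is the self-intersection of an ample Cartier divisor on a (Gorenstein) K3, hence even by the $K3$ Riemann--Roch.

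For part (2), the bound $(-K_X)^3 \le 72$ together with the classification of the equality case, I would follow the well-known strategy going back to Fano, Iskovskikh, Shokurov, Mukai, and in the singular Gorenstein setting to Prokhorov. The key step is again that a general anticanonical member $S$ is a Du Val $K3$ surface, and the restriction $H := -K_X|_S$ is an ample Cartier divisor with $H^2 = (-K_X)^3$; the linear system $|H|$ on the $K3$ surface $S$ has dimension $\tfrac{1}{2}H^2 + 1$ by Riemann--Roch and Kodaira vanishing. One then studies the anticanonical map $\varphi_{|-K_X|}\colon X \dashrightarrow \mathbb P^N$ with $N = \tfrac{1}{2}(-K_X)^3 + 2$. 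Using that $X$ has canonical Gorenstein singularities and that $-K_X$ is ample, one shows (via results of Prokhorov on Gorenstein Fano $3$-folds, e.g. the analysis of the possible images and the bound on the degree of the image) that $(-K_X)^3 \le 72$, with equality exactly when $X$ is one of the two weighted projective spaces $\mathbb P(1,1,1,3)$ or $\mathbb P(1,1,4,6)$. Concretely, I would invoke Prokhorov's theorem directly: this is precisely \cite{prokhorov2010}*{Theorem 1.4} (the Gorenstein canonical Fano $3$-fold of maximal degree $72$), so the proof reduces to citation plus the observation that $q_{\mathbb Q}(\mathbb P(1,1,1,3)) = 6$ and $q_{\mathbb Q}(\mathbb P(1,1,4,6)) = 12$, which one checks directly from $-K_{\mathbb P(a_0,\dots,a_3)} = \mathcal O(\sum a_i)$ and the structure of the class group of a weighted projective space.

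The main obstacle is that the equality case of part (2) is genuinely hard if one tries to prove it from scratch — it rests on Prokhorov's detailed birational analysis of Gorenstein canonical Fano $3$-folds of large degree, which classifies the boundary case among anticanonically embedded (or mapped) threefolds. Since this is a "recollection" proposition in the preliminaries, the intended proof is almost certainly just a citation to \cite{prokhorov2010} for part (2) and to either Reid's singular Riemann--Roch or the $K3$-section argument for part (1); I would not attempt to reprove Prokhorov's classification. The one point requiring a small independent argument is identifying $q_{\mathbb Q}$ for the two extremal weighted projective spaces: for $\mathbb P(1,1,1,3)$ one has $-K_X \sim \mathcal O(6) \sim_{\mathbb Q} 6A$ where $A = \mathcal O(1)$ generates $\Cl(X) \cong \mathbb Z$, giving $q_{\mathbb Q} = 6$; for $\mathbb P(1,1,4,6)$ one has $-K_X \sim \mathcal O(12)$, and $\gcd$-type considerations on the generator of the class group give $q_{\mathbb Q} = 12$.
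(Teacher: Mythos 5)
Your approach matches the paper's: part (1) is the singular Riemann--Roch formula, and part (2) is a citation to Prokhorov. The paper states the Riemann--Roch formula directly as $\chi(-K_X)=\tfrac{c_1(X)^3}{2}+3$ (using $\chi(\mathcal O_X)=1$ and $c_1\cdot c_2=24$ for Gorenstein canonical Fano $3$-folds), from which evenness is immediate; your derivation sketch has a small slip in the cubic term of the RR expansion (the middle factor should be $-3K_X$, not $-K_X$, giving coefficient $1/2$ rather than $1/6$), but the intended argument is the same. One correction on part (2): the relevant reference is Prokhorov's 2005 paper \emph{The degree of Fano threefolds with canonical Gorenstein singularities} (Mat.\ Sb.\ \textbf{196}), not \emph{$\mathbb Q$-Fano threefolds of large Fano index, I} (Doc.\ Math.\ \textbf{15}, 2010); the latter concerns terminal $\mathbb Q$-Fano threefolds of large $\mathbb Q$-Fano index and does not contain the degree-$72$ classification in the Gorenstein canonical setting. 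Your observation that the $q_{\mathbb Q}$ values for $\mathbb P(1,1,1,3)$ and $\mathbb P(1,1,4,6)$ require a small direct check from $-K_{\mathbb P(a_0,\dots,a_3)}\sim\mathcal O(\sum a_i)$ and $\Cl\cong\mathbb Z$ is correct and is indeed left implicit in the paper.
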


\begin{proof}
    The first conclusion follows directly from the Riemann--Roch formula $\chi(-K_X)=\frac{c_1(X)^3}{2}+3$. The second  conclusion is proved by Prokhorov \cite{prokhorov2005}.
\end{proof}

\begin{thm}[\cite{jiang-liu-liu}*{Theorem~4.2}]\label{thm.degreeandindex}
    Let $X$ be a Gorenstein canonical Fano $3$-fold. Let $A$ be an ample Weil divisor on $X$ such that $-K_X\sim qA$ for some positive integer $q$. Let $J_A$ be the smallest positive integer such that $J_AA$ is Cartier in codimension $2$. Then
    \[
        J_A\mid q \text{ and } q^2\mid J_Ac_1(X)^3.
    \] 
\end{thm}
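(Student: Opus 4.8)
The plan is to exploit the fact that although $A$ itself is only Weil, the divisor $J_A A$ is Cartier in codimension $2$, so intersection-theoretic computations involving $J_A A$ make sense on a suitable open subset whose complement has codimension $\geq 3$ in the $3$-fold $X$. First I would fix the ample Weil divisor $A$ with $-K_X \sim q A$ and let $U \subseteq X$ be the largest open set on which $J_A A$ is Cartier; by hypothesis $\codim(X \setminus U) \geq 3$, hence $X \setminus U$ is a finite set of points, and every divisor class (in particular $K_X$ and $A$) restricts to a genuine Cartier divisor on $U$. Since $-K_X \sim q A$ globally, on $U$ we have $q A|_U$ Cartier; I would first argue that $J_A \mid q$. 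For this, note that $q A$ is Cartier everywhere (it equals $-K_X$ up to linear equivalence, which is Cartier as $X$ is Gorenstein), so $q A$ is in particular Cartier in codimension $2$; by minimality of $J_A$ among positive integers $m$ with $mA$ Cartier in codimension $2$, and using that the set of such $m$ is closed under the subgroup structure (i.e.\ forms the positive part of an ideal $J_A \mathbb Z \subseteq \mathbb Z$), we get $J_A \mid q$.

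Next I would prove $q^2 \mid J_A c_1(X)^3$. Write $q = J_A \cdot k$ for the integer $k$ produced above. On the open set $U$, both $A|_U$ and $K_X|_U$ are Cartier, and $(J_A A)|_U$ is Cartier; moreover $c_1(X)^3 = (-K_X)^3$ can be computed as an intersection number on $X$, and since $X \setminus U$ has codimension $3$, this triple self-intersection equals the corresponding computation performed with the Cartier divisor $(-K_X)|_U = q A|_U = J_A k A|_U$. The key point is that on $U$ the class $(J_A A)|_U$ is an honest element of $\Pic(U)$, so the expression
\[
    J_A^3 \cdot \bigl((J_A A)|_U\bigr)^{?}
\]
needs to be organized correctly: one writes $c_1(X)^3 = (-K_X)^3 = (q A)^3$, which, because $qA$ is globally Cartier, is a well-defined integer, and then substitutes $qA = J_A k A$ to obtain $c_1(X)^3 = J_A^3 k^3 (A^3)_U$ where $(A^3)_U$ denotes the intersection number of the $\mathbb Q$-Cartier divisor $A$ computed on $U$ (equivalently, $\frac{1}{J_A^3}(J_A A)^3$ with $J_A A$ Cartier in codimension $2$). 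Since $(J_A A)^3$ is an integer — the self-intersection of a line bundle on the smooth-in-codimension-$2$ locus, and the missing points contribute nothing to a $0$-dimensional intersection on a $3$-fold — we get $J_A^3 (A^3)_U \in \mathbb Z$, hence $c_1(X)^3 = k^3 \cdot \bigl(J_A^3 (A^3)_U\bigr)$. Therefore $J_A c_1(X)^3 = k^3 J_A \cdot \bigl(J_A^3 (A^3)_U\bigr)$, and since $q^2 = J_A^2 k^2$ divides $k^3 J_A^4 (A^3)_U = k \cdot (J_A k)^2 \cdot J_A^2 (A^3)_U = k q^2 \cdot J_A^2 (A^3)_U$, we are done provided $J_A^2 (A^3)_U \in \mathbb Z$ — which holds because $J_A A$ is Cartier in codimension $2$ and $(J_A A)^2$ restricted to a general surface section (or a Cartier divisor in the class $|m(J_AA)|$ for suitable $m$) is an integral $0$-cycle class on $X$.

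The main obstacle, and the step requiring the most care, is making the codimension-$2$ Cartier condition interact correctly with a \emph{triple} intersection on a $3$-fold: a priori $(J_A A)^3$ is not obviously an integer merely from $J_A A$ being Cartier in codimension $2$, since the third intersection "sees" codimension-$3$ points. The right way around this is to cut down by a general member $H \in |-mK_X|$ for $m \gg 0$ (which is Cartier and base-point-free since $X$ is Gorenstein Fano), reducing to a normal surface $S = H$ on which $A|_S$ is Cartier away from finitely many points — but on a surface, a Weil divisor that is Cartier outside finitely many points still has a well-defined $\mathbb Q$-valued self-intersection, and the point is to track denominators. Concretely, I expect to invoke that $(J_A A)|_S$ is Cartier in codimension $1$ on $S$ hence (as $S$ is normal, so smooth in codimension $1$) the intersection $(J_A A)^2 \cdot H^{m'} $-type numbers on $X$ are computed honestly; combined with $-K_X \sim q A$ this forces the divisibility $q^2 \mid J_A c_1(X)^3$ after bookkeeping the factors of $J_A$ and $q = J_A k$. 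This divisor-restriction argument, already used in \cite{jiang-liu-liu}, is the technical heart; the rest is elementary number theory on the exponents.
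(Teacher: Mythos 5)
The first half of your argument, $J_A\mid q$, is fine: the set of integers $m$ with $mA$ Cartier in codimension~$2$ is a subgroup of $\mathbb Z$ (differences of Cartier divisors are Cartier on the common open set), so it equals $J_A\mathbb Z$, and $q$ lies in it because $qA\sim -K_X$ is Cartier on the Gorenstein $X$.

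The second half, however, rests on a false claim. After expanding $c_1(X)^3=(qA)^3=J_A^3k^3A^3$, you reduce the divisibility $q^2\mid J_Ac_1(X)^3$ to the assertion that $J_A^2A^3\in\mathbb Z$ (and earlier even to $J_A^3A^3\in\mathbb Z$), arguing that the codimension-$2$ Cartier condition on $J_AA$ forces integrality of these self-intersections. This is not true. Take $X=\mathbb P(1,1,1,3)$, a Gorenstein canonical Fano $3$-fold with the single singular point $[0{:}0{:}0{:}1]$, and $A=\mathcal O_X(1)$, so that $-K_X\sim 6A$. Here $A$ is Cartier away from a codimension-$3$ set, hence $J_A=1$, while $A^3=1/3$; so $J_A^2A^3=J_A^3A^3=1/3\notin\mathbb Z$. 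The theorem nevertheless holds in this example ($q^2=36$ divides $J_Ac_1(X)^3=72$), but the divisibility is produced by the extra factor of $q$, not by integrality of $J_A^2A^3$. The correct quantity one must control is $J_AqA^3=(J_AA)\cdot A\cdot(-K_X)$, and this cannot be obtained by the hyperplane-restriction device you propose: on a normal surface a Weil divisor that is Cartier outside finitely many points still has, in general, a non-integral self-intersection (e.g.\ $\mathcal O(1)^2=1/2$ on $\mathbb P(1,1,2)$), so restricting to a general $S\in|-mK_X|$ gives a rational, not integral, Weil$\,\cdot\,$Weil pairing. What is actually needed is a local analysis of the contributions of the $1$-dimensional singular locus --- the orbifold Riemann--Roch / basket technique of \cite{jiang-liu-liu} (see their \S4, where the invariants $(e_C,g_C,j_C)$ enter). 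Your approach, being purely intersection-theoretic, cannot see this refinement and does not close.
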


\subsection{$\mathbb Q$-variant of Langer's inequality}\label{sub.langerineq}

Let $X$ be a normal projective variety with canonical singularities of dimension $n\geq 2$. Then in codimension 2, $X$ has only quotient singularities and admits a structure of a $\mathbb{Q}$-variety (see \cite{gkpt}*{\S\,3.2 and \S\,3.6}). Then as in \cite{gkpt}*{Construction~3.8 and \S\,3.7}, for any reflexive sheaf $\mathcal{E}$ on $X$, we can define the \emph{generalized second Chern class} $\hat{c}_2(\mathcal{E})$, which is a symmetric $\mathbb{Q}$-multilinear map:
\[
 \hat{c}_2(\mathcal{E})\colon N^1(X)_{\mathbb{Q}}^{\times (n-2)} \longrightarrow \mathbb{Q},\quad (\alpha_1,\dots,\alpha_{n-2})\longmapsto \hat{c}_2(\mathcal{E})\cdot \alpha_1\cdots\alpha_{n-2}.
\]
On the other hand, as $X$ is $\mathbb{Q}$-factorial in codimension $2$, for any two $\mathbb{Q}$-divisor classes $\beta$ and $\gamma$ on $X$, the product $\beta\cdot \gamma$ is also a well-defined symmetric $\mathbb{Q}$-multilinear map:
\[
 \beta\cdot \gamma\colon N^1(X)_{\mathbb{Q}}^{\times (n-2)}\longrightarrow \mathbb{Q},\quad (\alpha_1,\dots,\alpha_{n-2})\longmapsto \beta\cdot \gamma\cdot \alpha_1\cdots\alpha_{n-2}.
\]
Now the \emph{$\mathbb{Q}$-Bogomolov discriminant} $\hat{\Delta}(\mathcal{E})$ of a reflexive sheaf $\mathcal{E}$ of rank $r$ is defined to be
	\[
	\hat{\Delta}(\mathcal{E})\coloneqq 2r \hat{c}_2(\mathcal{E}) - (r-1) c_1(\mathcal{E})^2,
	\]
which is viewed as a symmetric $\mathbb{Q}$-multilinear map $N^1(X)_{\mathbb{Q}}^{\times (n-2)}\rightarrow \mathbb{Q}$. 
Let $D_1,\dots, D_{n-1}$ be a collection of nef $\mathbb Q$-Cartier divisors. 
According to \cite{keel-matsuki-mckernan}*{Lemma~6.5}, for any semistable reflexive sheaf $\mathcal{E}$ with respect to $(D_1,\dots, D_{n-1})$, the following \emph{$\mathbb{Q}$-Bogomolov--Gieseker inequality} holds:
\begin{equation}\label{eq.BGineq}
 \hat{\Delta}(\mathcal{E})\cdot D_1\cdots D_{n-2} \geq 0.
\end{equation}

The following inequality generalizes the $\mathbb{Q}$-Bogomolov--Gieseker inequality to the non-semistable cases, and is commonly referred to as the $\mathbb Q$-variant of Langer's inequality (see, for example, \cite{jiang-liu-liu}*{\S~3.1} for details).

\begin{thm}\label{thm.LangerIneqII}
    Let $X$ be a normal projective variety with canonical singularities of dimension $n\geq 2$. Let $\mathcal{E}$ be a reflexive sheaf of rank $r$ on $X$. Let $(D_1,\dots,D_{n-1})$ be a collection of nef $\mathbb Q$-Cartier divisors. Then we have
	\[
	(D_1\cdots D_{n-2} \cdot D_{n-1}^2) (\hat{\Delta}(\mathcal{E})\cdot D_1\cdots D_{n-2}) + r^2 (\mu_{\max} - \mu) (\mu-\mu_{\min}) \geq 0,
	\]
	where $\mu$ (resp., $\mu_{\max}$ and $\mu_{\min}$) is the slope (resp., maximal slope and minimal slope) of $\mathcal{E}$ with respect to $(D_1,\dots,D_{n-1})$.
\end{thm}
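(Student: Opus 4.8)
The plan is to carry out Langer's classical argument for the non-semistable Bogomolov--Gieseker inequality, performed within the $\mathbb Q$-variety framework for $X$ exactly in the style of \cite{jiang-liu-liu}*{\S~3.1}; beyond formal manipulations the only inputs are the $\mathbb Q$-Bogomolov--Gieseker inequality \eqref{eq.BGineq}, a Hodge-index-type inequality for nef classes, and an elementary bound on the variance of the slopes. Write $\Pi\coloneq D_1\cdots D_{n-2}$ and $d\coloneq D_{n-1}^2\cdot\Pi\ge 0$, and let $0=\mathcal E_0\subsetneq\mathcal E_1\subsetneq\cdots\subsetneq\mathcal E_\ell=\mathcal E$ be the Harder--Narasimhan filtration of $\mathcal E$ with respect to $(D_1,\dots,D_{n-1})$; it is a filtration by saturated subsheaves, so the graded pieces $\mathcal G_i\coloneq\mathcal E_i/\mathcal E_{i-1}$ are torsion-free, of rank $r_i$ and slope $\mu_i$, and satisfy $\mu_1=\mu_{\max}>\mu_2>\cdots>\mu_\ell=\mu_{\min}$, $\sum_i r_i=r$ and $\sum_i r_i\mu_i=r\mu$. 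Put $\gamma_i\coloneq c_1(\mathcal G_i)$ and $\delta_i\coloneq\gamma_i-\tfrac{r_i}{r}c_1(\mathcal E)$, so that $\sum_i\delta_i=0$ and $\delta_i\cdot D_{n-1}\cdot\Pi=r_i(\mu_i-\mu)$.

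The first step is the \emph{discriminant identity for the filtration}. Since the $\mathcal G_i$ are torsion-free and the generalized Chern classes are additive in short exact sequences on the $\mathbb Q$-variety (where the computations are as in the smooth case, with $\hat c_2$ of a torsion-free sheaf read off from its reflexive hull), telescoping $\hat c_2(\mathcal E_i)=\hat c_2(\mathcal E_{i-1})+\hat c_2(\mathcal G_i)+c_1(\mathcal E_{i-1})\cdot c_1(\mathcal G_i)$, together with a short computation with $\hat\Delta$, yields
\[
\frac{\hat\Delta(\mathcal E)\cdot\Pi}{r}=\sum_{i=1}^\ell\frac{\hat\Delta(\mathcal G_i)\cdot\Pi}{r_i}-\sum_{i=1}^\ell\frac{\delta_i^2\cdot\Pi}{r_i}.
\]
I would then check that $\hat\Delta(\mathcal G_i)\cdot\Pi\ge 0$ for each $i$: passing to the reflexive hull $\mathcal G_i^{**}$ preserves $c_1$ and alters $\hat c_2$ by $\ch_2(\mathcal G_i^{**}/\mathcal G_i)$, the class of an effective codimension-two cycle, which is non-negative against $\Pi$, so $\hat\Delta(\mathcal G_i)\cdot\Pi\ge\hat\Delta(\mathcal G_i^{**})\cdot\Pi$; and $\mathcal G_i^{**}$ is reflexive and still semistable with respect to $(D_1,\dots,D_{n-1})$, hence $\hat\Delta(\mathcal G_i^{**})\cdot\Pi\ge 0$ by \eqref{eq.BGineq}. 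Discarding these non-negative terms leaves
\[
\hat\Delta(\mathcal E)\cdot\Pi\ \ge\ -\,r\sum_{i=1}^\ell\frac{\delta_i^2\cdot\Pi}{r_i}.
\]

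Next I would estimate the right-hand side. When $d>0$, the Hodge index inequality for the nef classes $D_1,\dots,D_{n-1}$ gives, for each $i$, $(\delta_i^2\cdot\Pi)\,d\le(\delta_i\cdot D_{n-1}\cdot\Pi)^2=r_i^2(\mu_i-\mu)^2$, while the elementary inequality
\[
\sum_{i=1}^\ell r_i(\mu_i-\mu)^2\ \le\ r\,(\mu_{\max}-\mu)(\mu-\mu_{\min})
\]
follows by summing $(\mu_i-\mu_{\max})(\mu_i-\mu_{\min})\le 0$ with the weights $r_i$ and using $\sum_i r_i\mu_i=r\mu$. Multiplying the previous lower bound for $\hat\Delta(\mathcal E)\cdot\Pi$ by $d$ and inserting these two estimates term by term yields $d\,(\hat\Delta(\mathcal E)\cdot\Pi)\ge-r^2(\mu_{\max}-\mu)(\mu-\mu_{\min})$, which is exactly the claimed inequality. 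The case $d=0$ is immediate, since $\mu_{\min}\le\mu\le\mu_{\max}$ holds for any Harder--Narasimhan filtration.

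I expect the main difficulty to be technical rather than conceptual: proving the Hodge index inequality $(\delta^2\cdot\Pi)(D_{n-1}^2\cdot\Pi)\le(\delta\cdot D_{n-1}\cdot\Pi)^2$ when the $D_i$ are merely nef (one first treats ample $D_i$, cutting down to a surface, and then passes to a limit), together with justifying the additivity of the generalized Chern classes and the semistability of $\mathcal G_i^{**}$ inside the $\mathbb Q$-variety framework of \cite{gkpt}; the inequality bookkeeping itself is routine. Alternatively, one could pull everything back along a suitable quasi-\'etale Galois cover, on which the generalized Chern classes are computed, and invoke the smooth version of Langer's inequality directly.
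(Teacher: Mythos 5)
The paper does not prove Theorem \ref{thm.LangerIneqII}; it is stated as a known result, with the reader referred to \cite{jiang-liu-liu}*{\S~3.1} for the proof, which is precisely the adaptation of Langer's classical non-semistable Bogomolov--Gieseker argument to the $\mathbb Q$-variety setting. Your proposal reconstructs exactly this argument --- the discriminant identity for the Harder--Narasimhan filtration, the $\mathbb Q$-Bogomolov--Gieseker inequality for the reflexive hulls of the graded pieces, the generalized Hodge index inequality, and the elementary slope-variance bound --- and is correct, modulo the routine technical justifications (additivity of $\hat c_2$ in codimension $2$, Hodge index for merely nef classes, semistability of $\mathcal G_i^{**}$) that you correctly flag as the bookkeeping parts of the $\mathbb Q$-variety framework.
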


\subsection{Basic notions of foliations}
	
    In this subsection, we collect some basic notions and facts regarding foliations on normal varieties. A more detailed explanation can be found in \cite{aroujo-druel}, \cite{druel2021}*{\S\,3} and the references therein. 
    
    \begin{defn}
        A {\it foliation} on a normal variety $X$ is a non-zero coherent subsheaf $\mathcal{F}$ of the tangent sheaf $\mathcal{T}_X$ such that 
        \begin{enumerate}
    	\item $\mathcal{T}_X/\mathcal{F}$ is torsion-free, and	
    	\item $\mathcal{F}$ is closed under the Lie bracket.
        \end{enumerate} 
        The {\it canonical divisor} of a foliation $\mathcal{F}$ is any Weil divisor $K_{\mathcal{F}}$ on $X$ such that $\det(\mathcal{F})\cong \mathcal{O}_X(-K_{\mathcal{F}})$. 
     
        Let $X_{0}\subset X_{\reg}$ be the largest open subset over which $\mathcal{T}_X/\mathcal{F}$ is locally free. A \emph{leaf} of $\mathcal{F}$ is a maximal connected and immersed holomorphic submanifold $L\subset X_{0}$ such that $\mathcal{T}_L=\mathcal{F}|_L$. A leaf is called \emph{algebraic} if it is open in its Zariski closure, and a foliation $\mathcal{F}$ is said to be \emph{algebraically integrable} if its leaves are algebraic. 
    \end{defn}

    Let $\mathcal{F}$ be an algebraically integrable foliation on a normal projective variety $X$. Then there exists a diagram as follows (\cite{druel2021}*{\S\,3.6}), which is called the \emph{family of leaves}: 

    \begin{equation}\label{eq.familyofleaves}
        \begin{tikzcd}[row sep=large, column sep=large]
    	U \arrow[r,"e"] \arrow[d,"f"]
    	& X \\
    	T
        \end{tikzcd}
    \end{equation}
    where $U$ and $T$ are normal projective varieties, $e$ is birational and $f$ is an equidimensional fibration, and the image of a general fiber $F$ of $f$ is the closure of a leaf of $\mathcal{F}$. In particular, $\dim F=\rank \mathcal F$. If $F$ is rationally connected, then $\mathcal F$ is said to be \emph{a foliation with rationally connected general leaves}.

    The following proposition restates \cite{ou}*{Proposition 3.1}, which is a special case of \cite{druel2017}*{Proposition 4.1}. It will serve a similar role in the proof of Proposition \ref{prop.diff} to that in establishing the pseudoeffectivity of the second Chern class in \cite{ou}.

    \begin{prop}\label{prop.pseff}
        Let $f\colon U\to T$ be a surjective morphism between normal projective varieties, where $U$ is $\mathbb Q$-factorial. Let $F$ be a general fiber of $f$. Let $\Delta$ be an effective $\mathbb Q$-divisor on $U$ such that 
        $(F,\Delta|_F)$ is log canonical. If $\kappa(F, K_F+\Delta|_F)\geq 0$, then $K_{\mathcal F}+\Delta$ is pseudoeffective, where $\mathcal F$ is the foliation induced by $f$. 
    \end{prop}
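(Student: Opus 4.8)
The plan is to propagate the fibrewise hypothesis $\kappa(F,K_F+\Delta|_F)\geq 0$ to the global pseudoeffectivity of $K_{\mathcal{F}}+\Delta$ by means of the weak positivity of direct images of relative pluricanonical sheaves (Viehweg, and its logarithmic extensions to log canonical fibres due to Campana, Fujino and Campana--P\u{a}un); this is the same mechanism that underlies the results of Campana--P\u{a}un and Druel on positivity of foliated and orbifold cotangent sheaves. First I would record the relevant relations on $U$. Since $\mathcal{F}$ is the foliation induced by $f$, it is the saturation of $\mathcal{T}_{U/T}$ in $\mathcal{T}_U$; hence $\mathcal{F}=\mathcal{T}_{U/T}$ and $K_{\mathcal{F}}=K_{U/T}$ over the smooth locus of $f$, while in general $K_{\mathcal{F}}-K_{U/T}$ is an $f$-vertical $\mathbb{Q}$-divisor supported on the non-reduced fibres of $f$, and $K_{\mathcal{F}}|_F\sim_{\mathbb{Q}}K_F$ for a general fibre $F$. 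In particular $(K_{\mathcal{F}}+\Delta)|_F\sim_{\mathbb{Q}}K_F+\Delta|_F$, so the hypotheses say precisely that the general fibre of $f$, with the restricted boundary, is log canonical with non-negative log Kodaira dimension; after possibly shrinking the $f$-vertical part of $\Delta$, which changes no fibrewise data, we may also assume that $(U,\Delta)$ is log canonical over an open subset of $T$.

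Next, because $\kappa(F,K_F+\Delta|_F)\geq 0$, for a sufficiently divisible $m$ one has $h^0\bigl(F,\omega_F^{[m]}(m\Delta|_F)\bigr)\geq 1$ for a general fibre $F$, so the reflexive sheaf $\mathcal{G}_m:=f_*\bigl(\omega_{U/T}^{[m]}(m\Delta)\bigr)$ is nonzero and the evaluation map $f^*\mathcal{G}_m\to\omega_{U/T}^{[m]}(m\Delta)$ is generically surjective on general fibres. By the weak positivity theorem for pushforwards of relative pluricanonical sheaves with log canonical fibres, $\mathcal{G}_m$ is weakly positive. Pulling this positivity back along $f$, composing with the evaluation map, clearing poles along the preimage of the locus in $T$ over which weak positivity degenerates, and letting an auxiliary ample class on $T$ tend to zero, one concludes that $K_{U/T}+\Delta$ is pseudoeffective.

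It then remains to absorb the vertical discrepancy between $K_{\mathcal{F}}$ and $K_{U/T}$ that is carried by the non-reduced fibres. I would do this via Kawamata's covering trick: choose a finite surjective morphism $T'\to T$ from a normal projective variety such that the fibres of the normalised base change $f'\colon U'\to T'$ are reduced in codimension one, so that $K_{\mathcal{F}'}\sim_{\mathbb{Q}}K_{U'/T'}$ for the foliation $\mathcal{F}'$ induced by $f'$; run the previous two steps on the pair $(U',\Delta')$, where $\Delta'$ is the log pullback of $\Delta$ and the general fibre pair is again log canonical with non-negative log Kodaira dimension, to get that $K_{\mathcal{F}'}+\Delta'$ is pseudoeffective; and finally push forward along the generically finite morphism $U'\to U$, using that the pushforward of a pseudoeffective class is pseudoeffective and that a standard ramification computation identifies the pushforward of $K_{\mathcal{F}'}+\Delta'$ with a positive multiple of $K_{\mathcal{F}}+\Delta$ (the branch divisors of $U'\to U$ being $f$-vertical, hence $\mathcal{F}$-invariant). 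Equivalently, the whole argument can be run intrinsically over Campana's orbifold base $(T,\Delta_f)$, in which case the weak positivity of $\mathcal{G}_m$ already encodes the multiple-fibre correction and no base change is needed.

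The weak positivity input is by now standard, so the main obstacle is this last passage: identifying the vertical discrepancy divisor $K_{\mathcal{F}}-K_{U/T}$ correctly and controlling how $\mathcal{F}$, the boundary $\Delta$, and the weak positivity statement behave under $T'\to T$ --- in particular checking that the log pullback $\Delta'$, which a priori need not be effective, still restricts on the general fibre to a log canonical boundary with non-negative log Kodaira dimension, and that the generically finite pushforward genuinely relates $K_{\mathcal{F}'}+\Delta'$ back to $K_{\mathcal{F}}+\Delta$. Everything else is bookkeeping around the family-of-leaves structure.
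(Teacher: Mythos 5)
The paper does not prove this proposition itself: as stated just above it, it is a restatement of Ou \cite{ou}*{Proposition~3.1}, which in turn is a special case of Druel \cite{druel2017}*{Proposition~4.1}, and the paper simply defers to those references. Your proposal is a faithful reconstruction of the argument that appears there: the two essential ingredients are weak positivity of direct images of relative log-pluricanonical sheaves (Campana, Fujino, Campana--P\u aun) and the ramification formula $K_{U/T}=K_{\mathcal F}+R(f)$ with $R(f)\geq 0$ the multiple-fibre divisor, and you correctly observe that $R(f)$ works \emph{against} you, so that pseudoeffectivity of $K_{U/T}+\Delta$ alone is insufficient --- the finite base change (equivalently, Campana's orbifold base) is precisely where the vertical correction is absorbed, exactly as in Druel's proof.

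Two clarifications on the issues you flag at the end, both of which resolve favourably. There is no need for a log pullback of $\Delta$: the cycle-theoretic pullback $\Delta'=p^*\Delta$ is effective, and since $p\colon U'\to U$ is étale in a neighbourhood of a general fibre (the branch locus lies over special points of $T$), the restriction of $(U',\Delta')$ to a general fibre $F'$ of $f'$ is isomorphic to $(F,\Delta|_F)$, so both the log canonicity and $\kappa\geq 0$ hypotheses transfer verbatim. The identity $K_{\mathcal F'}\sim_{\mathbb Q}p^*K_{\mathcal F}$ holds because every divisorial component of the ramification of $p$ is $f'$-vertical, hence $\mathcal F'$-invariant, so the foliated Hurwitz formula contributes no correction term; combined with $p_*p^*=\deg(p)\cdot\mathrm{id}$ on cycles, this gives $p_*(K_{\mathcal F'}+\Delta')=\deg(p)(K_{\mathcal F}+\Delta)$, and pushforward under a generically finite morphism preserves pseudoeffectivity. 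The one genuine technical wrinkle you do not mention is that the normalised base change $U'$ need not inherit $\mathbb Q$-factoriality from $U$; this is harmless because the weak-positivity input is in any case invoked on a smooth birational model of $U'$, from which pseudoeffectivity descends to $U'$ and then to $U$.
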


\section{Kawamata--Miyaoka type inequality}\label{sec.KM}

As shown in \cites{ijl,liu-liu2023, jiang-liu-liu}, the key step in establishing the Kawamata--Miyaoka type inequality is to study the foliation induced by the maximal destabilizing subsheaf $\mathcal E$ of the tangent sheaf $\mathcal T_X$. For $\rank (\mathcal E)\geq 2$, the desired inequality follows directly from Langer's inequality, regardless of the Picard number; for $\rank (\mathcal E)=1$, the key result is \cite{liu-liu2023}*{Proposition 3.8} or \cite{jiang-liu-liu}*{Proposition 3.6}, where the assumption $\rho(X)=1$ is essential. In this section, we generalize \cite{liu-liu2023}*{Proposition 3.8} and \cite{jiang-liu-liu}*{Proposition 3.6} to weak Fano varieties of arbitrary Picard number. 

\begin{prop}\label{prop.diff}
    Let $X$ be a weak Fano variety of dimension $n\geq 2$. Let $A$ be a Weil divisor on $X$ such that $-K_X\sim_{\mathbb Q} qA$ for some rational number $q>2$. Let $\mathcal L$ be a rank one algebraically integrable foliation with rationally connected general leaves on $X$. 
    \begin{enumerate}
        \item If $X$ has canonical singularities, then $-K_X+2K_{\mathcal L}$ is pseudoeffective.
        \item If $X$ has terminal singularities, then $-r_XK_X+(2r_X+1)K_{\mathcal L}$ is pseudoeffective, where $r_X$ is the Gorenstein index.
        \item If $X$ has $\epsilon$-lc singularities, then $-K_X+(1+\epsilon)K_{\mathcal L}$ is pseudoeffective, where $0<\epsilon\leq 1$ is a real number.
    \end{enumerate}
\end{prop}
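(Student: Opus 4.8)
The plan is to reduce each case to an application of Proposition~\ref{prop.pseff}, which produces pseudoeffective divisors on $X$ out of effective divisors on the general leaf. Since $\mathcal L$ has rank one, the family of leaves \eqref{eq.familyofleaves} gives $e\colon U\to X$ birational with $U$ $\mathbb Q$-factorial, and $f\colon U\to T$ an equidimensional fibration whose general fiber $F$ is a smooth rational curve with $e(F)$ the closure of a general leaf of $\mathcal L$. The key numerical relation to exploit is that on $F$ one has $K_F=\mathcal O_{\mathbb P^1}(-2)$, while the pullback of $K_{\mathcal L}$ to $U$, restricted to $F$, computes $K_{\mathcal L}\cdot e(F)$; and since $\mathcal L\hookrightarrow \mathcal T_X$ with torsion-free quotient, the degree of $\mathcal L$ along a general leaf is bounded below in terms of $-K_X$ via the hypothesis $-K_X\sim_{\mathbb Q}qA$ with $q>2$. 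Concretely, I expect $-K_X\cdot e(F)=q\,(A\cdot e(F))\geq q>2$ by integrality of $A\cdot e(F)$, while $-K_{\mathcal L}\cdot e(F)\le -K_X\cdot e(F)$ up to the correction coming from the discrepancy of $e$ along $F$.

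The first step is to set up on $U$ the $\mathbb Q$-divisor that measures the difference between the foliated canonical class and the pullback: write $e^*K_X=K_U+$ (a divisor supported on the $e$-exceptional locus and on the non-smooth locus of $f$), and compare $K_{\mathcal L}$ with $K_{U/T}$ along $f$. The standard bound (as in \cite{druel2021} and used in \cite{ou}) is that $e^*K_{\mathcal L}-K_{U/T}$ equals an effective divisor supported on $f$-vertical components together with possibly an anti-effective part controlled by the singularities of $X$; for canonical $X$, the anti-effective contribution vanishes on a general fiber. The second step is to choose $\Delta$ on $U$ so that $K_F+\Delta|_F$ becomes effective: on $F\cong\mathbb P^1$ we have $\deg K_F=-2$, so we need $\Delta|_F$ of degree $\ge 2$; the natural candidate is $\Delta=\lambda\,e^*A$ for a suitable $\lambda$, or more robustly $\Delta$ a small multiple of $e^*(-K_X)$ together with a vertical boundary, chosen so that $(F,\Delta|_F)$ stays log canonical (degree $\le 2$ with appropriate multiplicities) while $\deg(K_F+\Delta|_F)\ge 0$. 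Because $-K_X\cdot e(F)>2$, after pulling back there is just enough positivity to land exactly at degree $\ge 0$ with a log canonical pair; this is where the strict inequality $q>2$ is used. The third step is to feed this into Proposition~\ref{prop.pseff} to conclude $K_{\mathcal L}+\Delta$ is pseudoeffective on $U$, then push forward by $e_*$ to get pseudoeffectivity on $X$; finally, rearrange $e_*(K_{\mathcal L}+\Delta)$ into the stated combination $-K_X+2K_{\mathcal L}$ (resp.\ $-r_XK_X+(2r_X+1)K_{\mathcal L}$ in the terminal case, resp.\ $-K_X+(1+\epsilon)K_{\mathcal L}$ in the $\epsilon$-lc case) by bookkeeping the coefficients.

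The three cases differ only in how much room the singularities allow when forming $\Delta$. In the canonical case, the discrepancy divisor $e^*K_X-K_U$ is effective, so no positivity is lost and the clean coefficient $2$ appears. In the terminal case one has strict positivity of discrepancies, and multiplying through by the Gorenstein index $r_X$ clears denominators so that $r_X(-K_X)$ is Cartier and the combinatorics produce the coefficient $2r_X+1$; here the extra $+1$ reflects that terminal singularities are strictly better than canonical, buying an integral improvement. In the $\epsilon$-lc case, $e^*K_X-K_U$ may have components with coefficient as negative as $-\,(1-\epsilon)$ along exceptional divisors over $X$, eating into the positivity of $-K_X\cdot e(F)-2>0$; tracking this carefully replaces the coefficient $2$ by $1+\epsilon$. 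The main obstacle I anticipate is the second step: verifying that one can choose $\Delta$ making $(F,\Delta|_F)$ log canonical \emph{and} $K_F+\Delta|_F$ pseudoeffective \emph{simultaneously}, which requires controlling both the degree and the multiplicities of $\Delta$ along $F$ (in particular that the vertical boundary one is forced to add does not meet a general $F$), and correctly accounting for the $e$-exceptional and $f$-vertical contributions so that the pushforward lands on exactly the advertised divisor. A secondary technical point is ensuring $U$ can be taken $\mathbb Q$-factorial with the family-of-leaves structure intact, which follows from the standard construction but should be invoked explicitly.
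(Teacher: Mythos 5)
Your overall framework (family of leaves, Proposition~\ref{prop.pseff}, pushforward by $e_*$, coefficient bookkeeping) agrees with the paper's, and you correctly identify both the numerical constraint $-K_X\cdot e(F)=qA\cdot e(F)>2$ and the source of the difficulty: producing $\Delta$ on $U$ with $(F,\Delta|_F)$ log canonical, $\deg(K_F+\Delta|_F)\geq 0$, and the right pushforward. However, you leave that central difficulty unresolved, and your proposed candidates for $\Delta$ do not actually work. If you take $\Delta$ a multiple $c\,e^*(-K_X)$ (plus $\varepsilon H$), the degree constraint forces $c\geq 2/(-e^*K_X\cdot F)$, while the pushforward $e_*\Delta=c(-K_X)$ requires $c\leq \tfrac12$ to reach the asserted coefficient $2$; but in the canonical case one only knows $-e^*K_X\cdot F\geq 2$, so $c$ may be forced up to $1$ (e.g.\ if $-e^*K_X\cdot F=3$ then $c\geq 2/3$). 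Likewise $\Delta=\lambda e^*A$ can force $\lambda>1$ when $A\cdot e(F)=1$, breaking log canonicity, and any purely $f$-vertical boundary contributes nothing to $\Delta\cdot F$.

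The idea you are missing is the paper's use of the hypothesis $q>2$ to produce an $f$-horizontal $e$-exceptional prime divisor $E$: if the discrepancy divisor $K_U-e^*K_X$ were entirely $f$-vertical, then $-e^*K_X\cdot F=-K_U\cdot F=2$, whence $q\leq qA\cdot e(F)=-K_X\cdot e(F)=2$, a contradiction. Putting $a=1/(E\cdot F)\in(0,1]$ and $\Delta=aE+B$ with $B\sim_{\mathbb Q}-\tfrac12 e^*K_X+\varepsilon H$, the component $aE$ contributes exactly $1$ to $\Delta\cdot F$ while keeping coefficients $\leq 1$, and it vanishes under $e_*$ since $E$ is $e$-exceptional. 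This yields $\Delta\cdot F\geq 2$ with the coefficient of $e^*(-K_X)$ held at $\tfrac12$, and after $\varepsilon\to 0$ and pushforward gives precisely $K_{\mathcal L}-\tfrac12 K_X$ pseudoeffective. The terminal and $\epsilon$-lc refinements then come from sharpening the lower bound on $\Delta\cdot F$ (via $\mult_E(r_X\Delta)\geq 1$, resp.\ via writing the discrepancy as $\Delta_2-\Delta_1$ with $\Delta_1$ coefficients $\leq 1-\epsilon$) and adding a suitable nonnegative multiple of the pseudoeffective class $-K_X$; this is not an integrality bonus as you suggest but a direct consequence of the improved lower bound on $\Delta\cdot F$. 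Without the divisor $E$ the coefficient $\tfrac12$ cannot be reached, so the gap you flagged in your step two is a genuine gap, and the argument as outlined does not close it.
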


\begin{proof}
    The rank one algebraically integrable foliation  $\mathcal L$ induces a family of leaves 
    \begin{equation}\label{eq.folwithrc}
    \begin{tikzcd}[row sep=large, column sep=large]
	U \arrow[r,"e"] \arrow[d,"f"]
	& X \\
	T
    \end{tikzcd}
    \end{equation}
    where the general fiber $F$ of $f$ is isomorphic to $\mathbb P^1$. We assume that $X$ has only canonical singularities. Then there exists an effective $e$-exceptional $\mathbb Q$-divisor $\Delta$ on $U$ such that  
    \[
        K_U-e^*K_X\sim_{\mathbb Q} \Delta.
    \]
    Then it follows from $K_U\cdot F=\deg K_F=-2$ that
    \[
        -e^*K_X\cdot F=\Delta\cdot F+2\geq 2.
    \]
    
    We claim that there exists an $f$-horizontal $e$-exceptional prime divisor on $U$. Otherwise, the $e$-exceptional divisor $\Delta$ would be $f$-vertical, and hence  
    \[
        2=-K_U\cdot F=-e^*K_X\cdot F=-K_X\cdot e(F)=qA\cdot e(F)\geq q,
    \]
    as $A$  is Cartier in a neighborhood of $e(F)\cong \mathbb P^1$. This contracts our assumption.
    
    Let $E$ be an $f$-horizontal $e$-exceptional prime divisor on $U$. Then $E$ is Cartier in a neighborhood of $F$, so $0<a\coloneq 1/ E\cdot F\leq 1$. Now we have
    \begin{equation}\label{eq.keyeq}
        K_U+aE-\frac{1}{2}e^*K_X\sim_{\mathbb Q} \Delta+aE+\frac{1}{2}e^*K_X,
    \end{equation}
    where $(\Delta+aE+\frac{1}{2}e^*K_X)\cdot F= \Delta \cdot F +1 -\frac{1}{2}(\Delta\cdot F+2)=\frac{1}{2}\Delta\cdot F\geq 0$. That is, $ (\Delta+aE+\frac{1}{2}e^*K_X)|_F$ is effective on $F\cong \mathbb P^1$. 

    Let $H$ be an ample divisor on $U$ and $\varepsilon>0$ be a rational number. Then $-\frac{1}{2}e^*K_X+\varepsilon H$ is ample as $-e^*K_X$ is nef and big. Note that $0<a\leq 1$. So we can choose an effective $\mathbb Q$-divisor $B$ general enough  on $U$ such that $B\sim_{\mathbb Q} -\frac{1}{2}e^*K_X+\varepsilon H$ and $(F, aE|_F+B|_F)$ is log canonical. Moreover, we have
    \[
        K_F+aE|_F+B|_F \sim_{\mathbb Q}(K_U+aE-\frac{1}{2}e^*K_X+\varepsilon H)|_F\sim_{\mathbb Q} (\Delta+aE+\frac{1}{2}e^*K_X+\varepsilon H)|_F,
    \]
    which implies that $\kappa(F, K_F+aE|_F+B|_F)\geq 0$.
    
    Let $\pi \colon V\to U$ be a resolution isomorphic outside $\Sing(U)$ and $K_V\sim_{\mathbb Q}\pi^*K_U+\Delta_V$. Let $g\colon V\to T$ be the induced morphism such that $g=f\circ \pi$. As $U$ is smooth over the generic point of $T$, the $\pi$-exceptional divisor is $g$-vertical and $\pi$ is isomorphic over the generic point of $T$. So after replacing $U$ by $V$ if necessary without effecting the results on the general fiber $F$, we may assume that $U$ is smooth; in particular, $U$ is $\mathbb Q$-factorial.

    Then we can apply Proposition \ref{prop.pseff} to $f\colon U\to T$ and obtain that $K_{\mathcal F}+aE+B$ is pseudoeffective, where $\mathcal F$ is the foliation induced by $f$. As this holds for arbitrary $\varepsilon>0$, we obtain that $K_{\mathcal F}+aE-\frac{1}{2}e^*K_X$ is pseudoeffective by taking limit. Hence 
    \[
        K_{\mathcal L}-\frac{1}{2}K_X=e_*(K_{\mathcal F}+aE-\frac{1}{2}e^*K_X)
    \]
    is pseudoeffective.
    
     Finally, we deal with the terminal and $\epsilon$-lc cases. In the former case, we have that $\mult_E (r_X\Delta)\geq 1$, and hence $r_X\Delta\cdot F\geq E\cdot F\geq 1$. Note that
     \[
        (\Delta+aE+(\frac{\Delta\cdot F+1}{\Delta\cdot F+2})e^*K_X)\cdot F=\Delta\cdot F+1-\frac{\Delta\cdot F+1}{\Delta\cdot F+2}(\Delta\cdot F+2)=0.
     \]
     Then as above, we have that $K_{\mathcal L}-\frac{1}{\Delta\cdot F+2}K_X$ is pseudoeffective. Hence
     \[
        K_{\mathcal L}-\frac{r_X}{2r_X+1}K_X= K_{\mathcal L}-\frac{1}{\Delta\cdot F+2}K_X+\frac{r_X\Delta\cdot F-1}{(2r_X+1)(\Delta\cdot F+2)}(-K_X)
    \]
    is pseudoeffective.
    In the latter case, as $X$ has $\epsilon$-lc singularities, we have
    \[
        K_U-e^*K_X+\Delta_1\sim_{\mathbb Q} \Delta_2,
    \]
    where $\Delta_1$ and $\Delta_2$ are effective $e$-exceptional $\mathbb Q$-divisors without common components, and the coefficients of $\Delta_1$ are $\leq 1-\epsilon$. Assume that $\mult_E\Delta_1=1-\epsilon _1$. Then $ \epsilon\leq \epsilon_1\leq 1$ and $\Delta_1\cdot F\geq (1-\epsilon_1)E\cdot F\geq 1-\epsilon_1$. Hence we have
    \[
        K_U+\epsilon_1 aE+\Delta_1-\frac{1}{1+\epsilon_1 } e^*K_X\sim_{\mathbb Q} \Delta_2+\epsilon_1 aE+\frac{\epsilon_1 }{1+\epsilon_1 } e^*K_X,
    \]
    where $(\Delta_2+\epsilon_1 aE+\frac{\epsilon_1 }{1+\epsilon_1 } e^*K_X)\cdot F=\Delta_2\cdot F+\epsilon_1+\frac{\epsilon_1 }{1+\epsilon_1 }(\Delta_1\cdot F-\Delta_2\cdot F-2)=\frac{1}{1+\epsilon_1 }\Delta_2\cdot F+\frac{\epsilon_1}{1+\epsilon_1 }(\Delta_1\cdot F+\epsilon_1-1)\geq 0$. Then the same argument implies that 
    \[
        K_{\mathcal L}-\frac{1}{1+\epsilon} K_X=K_{\mathcal L}-\frac{1}{1+\epsilon_1 } K_X+\frac{\epsilon_1-\epsilon}{(1+\epsilon_1)(1+\epsilon)}(-K_X)
    \]
    is pseudoeffective. 
\end{proof}

\begin{rem}
    The assumption that $-K_X\sim_{\mathbb Q} qA$ for some $q>2$ is only used in the proof to ensure the existence of an $f$-horizontal $e$-exceptional prime divisor. Therefore, the proposition also holds for $q\leq 2$ provided that such a divisor exists. Moreover, the more $f$-horizontal $e$-exceptional divisors there are, the smaller $c_1(\mathcal L)\cdot c_1(X)^{n-1}/c_1(X)^n$ becomes. 
\end{rem}

\begin{rem}\label{rem.smooth}
    On the other hand, if there are no $f$-horizontal $e$-exceptional prime divisors, then the rank one foliation $\mathcal L$ gives rise to an almost holomorphic map $X\dasharrow T$ whose general fiber $F$ is $\mathbb P^1$. In particular, we have $\rho(X)\geq 2$ and $c_1(\mathcal L)\cdot F=qA\cdot F=-K_X\cdot F=2$. 
\end{rem}

The following observation for the case where $X$ is smooth and $q=2$ is due to Jie Liu. We reproduce his proof.

\begin{prop}[Jie Liu]\label{prop.smoothq=2}
    Let $X$ be a Fano manifold such that $-K_X\sim 2A$ for some ample divisor $A$. Let $\mathcal L$ be a rank one algebraically integrable foliation with rational connected leaves on $X$ such that $A\cdot F=1$ for a general leaf $F$ of $\mathcal L$. Then 
    \[
        2c_1(\mathcal L)\cdot c_1(X)^{n-1}\leq c_1(X)^n.
    \]
\end{prop}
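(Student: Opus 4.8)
The claimed inequality is purely numerical, so I would start by reducing it to a single statement. Since $c_1(X)=-K_X$ and $-K_X\sim 2A$, we have $c_1(X)^n=2^nA^n$ and $2\,c_1(\mathcal L)\cdot c_1(X)^{n-1}=2^n\,c_1(\mathcal L)\cdot A^{n-1}$, so the asserted inequality is equivalent to $c_1(\mathcal L)\cdot A^{n-1}\leq A^n$. As $X$ is smooth, $\mathcal L$ is a line bundle with $c_1(\mathcal L)=-K_{\mathcal L}$, and the plan is to show that the divisor class $A-c_1(\mathcal L)$ pairs non-negatively with the curve class $A^{n-1}$.

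The main idea is to restrict the inclusion $\mathcal L\hookrightarrow\mathcal T_X$ to a general member $S\in|A|$. First I would record that a general leaf $F$ of $\mathcal L$ is (the closure of) a smooth rational curve $F\cong\mathbb P^1$ that avoids the singular locus of the foliation and the base locus of $|A|$, so that $\mathcal L|_F=\mathcal T_F$ inside $\mathcal T_X|_F$. Since $S\sim A$ and $A\cdot F=1$, a general $S$ does not contain $F$ and the scheme $S\cap F$ has length one, hence is a single reduced point $p$. Consequently a local equation of $S$ at $p$ vanishes to order exactly one along $F$, its differential is nonzero at $p$, and therefore $S$ is smooth at $p$ with $\mathcal L|_p=\mathcal T_F|_p\not\subseteq\mathcal T_S|_p$. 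Thus $F$ is transverse to $S$ at $p$; this is the only place where the normalisation $A\cdot F=1$ enters, and it is precisely what makes the estimate sharp.

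Next I would run the restriction argument. A general $S\in|A|$ is an integral divisor — compositeness of $|A|$ with a pencil, or the presence of a fixed component, would force $A^n=0$, contradicting ampleness — and, being a Cartier divisor on a smooth variety, it is Gorenstein with conormal sequence $0\to\mathcal O_S(-S)\to\Omega^1_X|_S\to\Omega^1_S\to 0$. Dualising the latter gives a map $\mathcal T_X|_S\to N_{S/X}=\mathcal O_S(A|_S)$, and by the transversality above the composite
\[
\mathcal L|_S\;\hookrightarrow\;\mathcal T_X|_S\;\longrightarrow\;N_{S/X}=\mathcal O_S(A|_S)
\]
is nonzero at $p$, hence a nonzero morphism of line bundles on the integral scheme $S$, hence injective. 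Therefore $\mathcal O_S\big((A-c_1(\mathcal L))|_S\big)$ has a nonzero section, so $(A-c_1(\mathcal L))|_S$ is linearly equivalent to an effective divisor on $S$; intersecting with the nef class $(A|_S)^{n-2}$ and using $[S]=A$ yields
\[
0\;\leq\;(A-c_1(\mathcal L))|_S\cdot(A|_S)^{n-2}\;=\;(A-c_1(\mathcal L))\cdot A^{n-1}\;=\;A^n-c_1(\mathcal L)\cdot A^{n-1},
\]
which, recalling $c_1(X)=2A$, is exactly $2\,c_1(\mathcal L)\cdot c_1(X)^{n-1}\leq c_1(X)^n$.

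The step I expect to need the most care is controlling $|A|$ when $A$ is only ample and not very ample: one must guarantee both that a general member $S$ is an integral Gorenstein divisor (so that the conormal sequence is exact and a nonzero map of line bundles on $S$ is automatically injective) and that such an $S$ meets a general leaf transversally in one reduced point. These facts follow from ampleness of $A$ together with $X$ being Fano, as sketched above; if one prefers to avoid them, one may instead argue over the base $T$ of the family of leaves of $\mathcal L$, which is birational to a general member of $|A|$. The remaining points — that a general leaf is a free rational curve with $\mathcal L|_F=\mathcal T_F$ avoiding $\Sing(\mathcal L)$ and $\operatorname{Bs}|A|$, and that an effective divisor pairs non-negatively with powers of an ample class — are routine.
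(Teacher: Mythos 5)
Your reduction to $c_1(\mathcal L)\cdot A^{n-1}\leq A^n$ and the plan of restricting $\mathcal L\hookrightarrow\mathcal T_X$ to a general divisor and mapping to the normal bundle is a natural idea, but the argument as written has a genuine gap at the step where you need a general $S\in|A|$ to be integral (and to meet a general leaf transversally in a single reduced point). The justification you give --- ``compositeness of $|A|$ with a pencil, or the presence of a fixed component, would force $A^n=0$'' --- is incorrect in the fixed-component case: if $A=M+E$ with $E$ a fixed part, then $A^n>0$ by ampleness while every member of $|A|$ contains $E$ and is hence reducible. Ampleness does not imply freeness or very ampleness, and there is no a priori control on $\operatorname{Bs}|A|$ or on the irreducibility of general members of $|A|$ in this setting; $h^0(A)>0$ does follow from Kawamata nonvanishing since $A-K_X=3A$ is ample, but that is all. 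Moreover, the argument genuinely needs $m=1$: if you replace $|A|$ by a very ample $|mA|$ to guarantee an integral smooth member, the same restriction argument only yields $mA^n\geq c_1(\mathcal L)\cdot A^{n-1}$, which is weaker than the desired bound for $m>1$. So the transversality-restriction route, while appealing, does not close without a serious additional input on $|A|$, and your proposed fallback (``argue over the base $T$'') is left unjustified --- $T$ need not be birational to a member of $|A|$.

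The paper's proof avoids the linear system $|A|$ entirely. Working on the family of leaves $U\to T$, it sets $B=e^*A$, observes that $B$ is $f$-ample Cartier with $B\cdot F=1$, concludes that every fiber is reduced and irreducible, hence $f$ is a $\mathbb P^1$-bundle $U\cong\mathbb P(\mathcal E)$ with $\mathcal E=f_*\mathcal O_U(B)$ locally free of rank two. Writing $c_1(\mathcal F)=2B-f^*c_1(\mathcal E)=e^*c_1(X)-f^*c_1(\mathcal E)$ and invoking the Grothendieck relation $B^n-f^*c_1(\mathcal E)\cdot B^{n-1}+f^*c_2(\mathcal E)\cdot B^{n-2}=0$ together with the nonnegativity $f^*c_2(\mathcal E)\cdot B^{n-2}\geq 0$ (since $\mathcal E$ is nef and big) then delivers the inequality by a direct computation. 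This route is where the hypothesis $A\cdot F=1$ is actually exploited --- it forces the $\mathbb P^1$-bundle structure --- and it bypasses all questions about general members, base loci, and the singular locus of the foliation that your proposal would still need to settle.
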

\begin{proof}
    We use the same notation as in \eqref{eq.folwithrc}. Let $B\coloneq e^*A$, then $B$ is a Cartier divisor such that $B\cdot F=1$. Moreover, $B$ is $f$-ample as $A$ is ample and $e$ is finite on the fibers of $f$. It follows that any fiber of $f$ is reduced and irreducible. Therefore, $f$ is a $\mathbb P^1$-bundle by \cite{kollar}*{II, Theorem 2.8}. That is, $\mathcal E\coloneq f_*\mathcal O_U(B)$ is a rank two locally free sheaf on $T$ and $U\cong \mathbb P(\mathcal E)$. By taking a resolution of $T$ and the base change, we can further assume that $T$ is smooth. 

    Let $\mathcal F$ be the foliation induced by $f$. Then there exists an $e$-exceptional divisor $\Delta_f$ such that $c_1(\mathcal F)+\Delta_f=-K_{\mathcal F}+\Delta_f=-e^*K_{\mathcal L}=e^*c_1(\mathcal L)$ (see \cite{druel2021}*{\S~3.6} for example). By the projection formula, we have
    \[
        c_1(\mathcal F)\cdot B^{n-1}=(c_1(\mathcal F)+\Delta_f)\cdot B^{n-1}=c_1(\mathcal L)\cdot A^{n-1}.
    \]
    On the other hand, we have
    \[
        c_1(\mathcal F)=-K_{\mathcal F}=-K_{\mathbb P(\mathcal E)/T}=2B-f^*c_1(\mathcal E)=e^*c_1(X)-f^*c_1(\mathcal E),
    \]
    which implies that 
    \[
        c_1(\mathcal L)\cdot A^{n-1}= c_1(\mathcal F)\cdot B^{n-1}=(e^*c_1(X)-f^*c_1(\mathcal E))\cdot B^{n-1}=c_1(X)\cdot A^{n-1}-f^*c_1(\mathcal E)\cdot B^{n-1}.
    \]
    Note that $B^n-f^*c_1(\mathcal E)\cdot B^{n-1}+f^*c_2(\mathcal E)\cdot B^{n-2}=0$. As $B$ is nef, the locally free sheaf $\mathcal E$ is nef and big. Hence we have $f^*c_2(\mathcal E)\cdot B^{n-2}\geq 0$ (cf. \cite{lazarsfeld}*{Theorem 8.2.1}). It follows that 
    \[
        \frac{1}{2}c_1(X)\cdot A^{n-1}=A^n=B^n\leq f^*c_1(\mathcal E)\cdot B^{n-1}=c_1(X)\cdot A^{n-1}-c_1(\mathcal L)\cdot A^{n-1}.
    \]
    Then the desired inequality follows.
\end{proof}

We now turn to the proof of Theorem \ref{thm.KMinq}. The argument follows exactly as in \cite{jiang-liu-liu}*{Theorem 1.2} (or \cite{liu-liu2023}*{Theorem 1.1}) after replacing \cite{jiang-liu-liu}*{Proposition 3.6} (or \cite{liu-liu2023}*{Proposition 3.8}) with Proposition \ref{prop.diff}. We include the detailed proof below for the reader's convenience.

\begin{proof}[Proof of Theorem \ref{thm.KMinq}]
    If $\mathcal{T}_X$ is semistable with respect to the collection of nef divisors $(D_1,\dots,D_{n-2}, c_1(X))$, then by the $\mathbb{Q}$-Bogomolov--Gieseker inequality \eqref{eq.BGineq}, we have
    \[
        c_1(X)^{2}\cdot D_1 \cdots D_{n-2}\leq \frac{2n}{n-1}\, \hat{c}_2(X)\cdot D_1 \cdots D_{n-2}\leq 4 \, \hat{c}_2(X)\cdot D_1 \cdots D_{n-2}.
    \]
    If $n\geq 3$, then the second inequality above is strict; if $n=2$ and additionally $X$ is Fano, then by \cite{gkp}*{Theorem 1.2}, the first inequality  above is strict and hence we have
    \[
        c_1(X)^{2}\cdot D_1 \cdots D_{n-2} <4 \,\hat{c}_2(X)\cdot D_1 \cdots D_{n-2}.
    \]
    So we may assume that $\mathcal{T}_X$ is not semistable with respect to $(D_1,\dots,D_{n-2}, c_1(X))$. Applying Theorem~\ref{thm.LangerIneqII} to $\mathcal{T}_X$ and $(D_1,\dots,D_{n-2}, c_1(X))$ yields
    \begin{align*}
        \hat{\Delta}(\mathcal{T}_X)\cdot D_1 \cdots D_{n-2} + n\left(\mu_{\max}(\mathcal{T}_X)-\mu(\mathcal{T}_X)\right)
        & \geq \frac{n^2\left(\mu_{\max}(\mathcal{T}_X)-\mu(\mathcal{T}_X)\right) \cdot \mu_{\min}(\mathcal{T}_X)}{n\mu(\mathcal{T}_X)}.
    \end{align*}
    It follows from \cite{liu-liu2023}*{Proposition~3.6} that $\mu_{\min}(\mathcal{T}_X)\geq 0$, and it is strict if $X$ is Fano and the $D_i$'s are ample. As $\mu_{\max}(\mathcal{T}_X)>\mu(\mathcal{T}_X)$, the above inequalities imply that
    \begin{equation}\label{eq.keyineq}
        \left(2\hat{c}_2(X) - c_1(X)^2\right)\cdot D_1 \cdots D_{n-2} + \mu_{\max}(\mathcal{T}_X) \geq 0,
    \end{equation}
    and it is strict if $X$ is Fano and the $D_i$'s are ample. Denote by $\mathcal{F}$ the maximal destabilizing subsheaf of $\mathcal{T}_X$. Then \cite{liu-liu2023}*{Proposition~3.6} implies that 
    \begin{equation}\label{eq.genample}
        c_1(\mathcal{F})\cdot c_1(X)\cdot D_1 \cdots D_{n-2}\leq c_1(X)^2\cdot D_1 \cdots D_{n-2}.
    \end{equation}
    If $\rank (\mathcal F)\geq 2$, then
    \[
        \mu_{\max}(\mathcal{T}_X) = \mu(\mathcal{F})=\frac{c_1(\mathcal{F})\cdot c_1(X)\cdot D_1 \cdots D_{n-2}}{\rank (\mathcal{F})} \leq \frac{1}{2} \, c_1(X)^2\cdot D_1 \cdots D_{n-2}
    \]
    and the desired results follow immediately from \eqref{eq.keyineq}. If $\rank (\mathcal F)=1$, then as $\mathcal F$ is the maximal destabilizing subsheaf of $\mathcal{T}_X$, we have $\mu(\mathcal{F})>\mu(\mathcal T_X)>0$ and $\mu(\mathcal{F})>\mu(\mathcal T_X/\mathcal F)$. Then by \cite{ou}*{Proposition 2.2}, which extends \cite{cp}*{Theorem 1.1} to singular varieties, $\mathcal F$ is a rank one algebraically integrable foliation with rationally connected general leaves. Hence
    by Proposition \ref{prop.diff}, 
    \[
        \mu_{\max}(\mathcal{T}_X) = \mu(\mathcal{F})=c_1(\mathcal{F})\cdot c_1(X)\cdot D_1 \cdots D_{n-2} \leq \frac{1}{2} \, c_1(X)^2\cdot D_1 \cdots D_{n-2},
    \]
    so the desired results follow from \eqref{eq.keyineq} again.
\end{proof}

\begin{rem}\label{rem.lccase}
    A direct adaptation of the proof of Theorem \ref{thm.KMinq} via Proposition \ref{prop.diff}(3) yields a generalization to the $\epsilon$-lc setting. Specifically, let $X$ be a $\mathbb Q$-factorial $\epsilon$-lc Fano variety of dimension $n\geq 2$. If $q_{\mathbb Q}(X)\geq 3$, then 
    \[
         c_1(X)^n\leq \frac{2+2\epsilon}{\epsilon}\,\hat c_2(X)\cdot c_1(X)^{n-2}.
    \]
\end{rem}

\begin{rem}\label{rem.pseff}
    By \eqref{eq.genample} in the proof, we see that $\mu_{\max}(\mathcal{T}_X) = \mu(\mathcal{F})\leq c_1(X)^2\cdot D_1 \cdots D_{n-2}$ always holds, regardless of the Picard number. So \eqref{eq.keyineq} implies that for any nef divisors $D_1,\dots D_{n-2}$,
    \begin{equation}
        \hat{c}_2(X) \cdot D_1 \cdots D_{n-2}\geq 0
    \end{equation}
    and it is strict if $X$ is Fano and the $D_i$'s are ample. This inequality holds for klt singularities without restrictions on the $\mathbb Q$-Fano indices. It is known as the \emph{pseudoeffectivity of the generalized second Chern class} for klt weak Fano varieties (cf. \cite{ou}*{Corollary 1.5}).
\end{rem}

\section{Fano indices of Gorenstein canonical Fano  threefolds}\label{sec.Gorenstein}

In this section, we apply the Kawamata--Miyaoka type inequality to study the $\mathbb Q$-Fano indices of Gorenstein canonical Fano $3$-folds. Note that the problem cannot be reduced to the Picard number one case, as the Gorenstein property is not necessarily preserved when running the minimal model program.

\begin{thm}\label{thm.KMfor3fold}
    Let $X$ be a Gorenstein canonical Fano $3$-fold. Let $A$ be an ample Weil divisor such that $-K_X\sim qA$ for some positive integer $q$. Let $J$ be the smallest positive integer such that  $JA$ is Cartier in codimension $2$. Let $J=p_1^{a_1}p_2^{a_2}\cdots p_k^{a_k}$ be the prime factorization, where $p_i$ are distinct prime numbers. If $q\geq 3$, then
    \begin{align}
        \sum_{i=1}^k\left(p_i^{a_i}-\frac{1}{p_i^{a_i}}\right)\leq 24- \frac{c_1(X)^3}{4}.\label{eq upper J1}
    \end{align}
\end{thm}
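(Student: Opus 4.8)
The plan is to combine the Kawamata--Miyaoka inequality of Corollary~\ref{cor.KMinq} with an orbifold Riemann--Roch computation expressing $\hat c_2(X)\cdot c_1(X)$ in terms of the transverse Du~Val singularities of $X$ lying in codimension two. Since $-K_X\sim qA$ with $q\geq 3$, we have $q_{\mathbb Q}(X)\geq q\geq 3$, so Corollary~\ref{cor.KMinq} applies, and as $X$ is Fano it yields the strict inequality $\hat c_2(X)\cdot c_1(X)>\tfrac14\,c_1(X)^3$. It therefore suffices to bound $\hat c_2(X)\cdot c_1(X)$ from above by $24-\sum_{i=1}^k\bigl(p_i^{a_i}-\tfrac{1}{p_i^{a_i}}\bigr)$.

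To that end I would first record that, $X$ being Gorenstein and canonical, its singularities are compound Du~Val; in codimension two they lie along finitely many curves $C_1,\dots,C_t\subset\Sing X$, with a transverse Du~Val singularity along each $C_j$, while in codimension three $X$ has at worst isolated compound Du~Val points. Applying Reid's Riemann--Roch to $\mathcal O_X(-K_X)$, with $\chi(\mathcal O_X)=1$ (Kawamata--Viehweg vanishing) and $\chi(\mathcal O_X(-K_X))=\tfrac{1}{2}c_1(X)^3+3$ (Proposition~\ref{thm.degree})---equivalently, comparing $\hat c_2(X)$ with the second Chern class of a crepant partial resolution, on which the isolated Gorenstein points contribute nothing because $-K_X$ is Cartier there---this should give an identity
\[
    \hat c_2(X)\cdot c_1(X)=24-\sum_{j=1}^{t}\bigl(-K_X\cdot C_j\bigr)\,\tau_j ,
\]
where $\tau_j>0$ depends only on the transverse Du~Val type of $X$ at the generic point of $C_j$; for transverse type $A_{m-1}$ one should get $\tau_j=\tfrac{m^2-1}{m}=m-\tfrac1m$, and in every case $\tau_j\geq e_j-\tfrac1{e_j}$, where $e_j$ is the exponent of the local divisor class group $G_j$ of $X$ at the generic point of $C_j$ (i.e.\ of the discriminant group of that Du~Val type). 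As a consistency check, for $X=\mathbb P(1,1,4,6)$ one has $\hat c_2(X)\cdot c_1(X)=\tfrac{45\cdot 12}{24}=22.5$, and the unique singular curve $C$ (transverse $A_1$) satisfies $-K_X\cdot C=1$, so indeed $22.5=24-1\cdot\tfrac32$.

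Next I would read off $J$. Since $-K_X$ is ample, $-K_X\cdot C_j\geq 1$, so each summand above is $\geq e_j-\tfrac1{e_j}\geq 0$. Moreover $JA$ is Cartier in codimension two exactly when $J$ annihilates the class of $A$ in every $G_j$, so $J=\operatorname{lcm}_j d_j$ with $d_j\mid e_j$ the order of $[A]$ in $G_j$. Hence for each prime $p\mid J$, writing $a_p$ for its exact exponent in $J$, there is an index $j(p)$ with $p^{a_p}\mid d_{j(p)}\mid e_{j(p)}$. Grouping the primes dividing $J$ by the value of $j(p)$, and using that $x\mapsto x-\tfrac1x$ is increasing on $[1,\infty)$ together with the elementary estimate $\prod_i m_i-(\prod_i m_i)^{-1}\geq\sum_i\bigl(m_i-\tfrac1{m_i}\bigr)$ for integers $m_i\geq 2$, I get
\[
    \sum_{j=1}^{t}\bigl(-K_X\cdot C_j\bigr)\,\tau_j\ \geq\ \sum_{j}\Bigl(e_j-\tfrac1{e_j}\Bigr)\ \geq\ \sum_{p\mid J}\Bigl(p^{a_p}-\tfrac1{p^{a_p}}\Bigr)=\sum_{i=1}^{k}\Bigl(p_i^{a_i}-\tfrac1{p_i^{a_i}}\Bigr).
\]
Combining this with the displayed identity and the Kawamata--Miyaoka inequality of the first paragraph gives
\[
    \sum_{i=1}^{k}\Bigl(p_i^{a_i}-\tfrac1{p_i^{a_i}}\Bigr)\ \leq\ 24-\hat c_2(X)\cdot c_1(X)\ <\ 24-\tfrac14\,c_1(X)^3 ,
\]
which is \eqref{eq upper J1}, in fact with strict inequality.

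The hard part is the Riemann--Roch identity of the second paragraph: one must make the orbifold Riemann--Roch (equivalently the $\hat c_2$-localization) precise on a Gorenstein canonical threefold, match the transverse Du~Val type at the generic point of $C_j$ with the discriminant group $G_j$ and its exponent $e_j$, check that the isolated compound Du~Val points contribute nothing---this is where Gorensteinness, i.e.\ that $-K_X$ is Cartier at those points, is essential---and establish both the sign and the lower bound $\tau_j\geq e_j-\tfrac1{e_j}$ (an equality $\tfrac{m^2-1}{m}$ in the transverse $A_{m-1}$ case). Everything else is elementary; the only minor subtlety in the last step is the bookkeeping when several primes dividing $J$ are realized along the same curve $C_j$, which the superadditivity estimate handles.
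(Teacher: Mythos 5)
Your overall strategy --- orbifold Riemann--Roch to express $24-\hat c_2(X)\cdot c_1(X)$ as a sum of local contributions along the singular curves, a lower bound for each local contribution in terms of the order of $[A]$ in the local class group, a superadditivity estimate for $x\mapsto x-\tfrac1x$, and the Kawamata--Miyaoka inequality --- is exactly the decomposition the paper uses. The ``hard part'' you flag in the last paragraph is not re-derived in the paper either; it is cited from Theorem~4.6 and Lemma~4.5 of \cite{jiang-liu-liu} (with $c_2(X)\cdot c_1(X)=24$ from Gorensteinness and Riemann--Roch), and the superadditivity step is cited from \cite{chen-jiang}. So up to notation your outline reproduces the intended argument.

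There is, however, one genuine gap: you apply Corollary~\ref{cor.KMinq} directly to $X$, but that corollary has a $\mathbb Q$-factoriality hypothesis which is \emph{not} part of the hypotheses of Theorem~\ref{thm.KMfor3fold}. The paper handles this by passing to a small $\mathbb Q$-factorialization $f\colon Y\to X$ with $K_Y=f^*K_X$: then $Y$ is a $\mathbb Q$-factorial Gorenstein canonical \emph{weak} Fano threefold with $q_{\mathbb Q}(Y)=q_{\mathbb Q}(X)\geq 3$, and since $X$ is $\mathbb Q$-factorial in codimension~$2$ and $f$ is small, $\hat c_2(Y)\cdot c_1(Y)=\hat c_2(X)\cdot c_1(X)$, so Corollary~\ref{cor.KMinq} applied to $Y$ gives $c_1(X)^3\leq 4\,\hat c_2(X)\cdot c_1(X)$. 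Without this reduction your invocation of the corollary is not licensed. A related consequence: your claimed \emph{strict} inequality relies on $X$ being Fano, but after the necessary reduction one works with the weak Fano $Y$, where Corollary~\ref{cor.KMinq} only gives $\leq$; this is why the theorem is stated with $\leq$ rather than $<$.
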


\begin{proof}
    We use the notation $(e_C, g_C,j_C)$ in \cite{jiang-liu-liu}*{Definition 4.4}. Then by \cite{jiang-liu-liu}*{Theorem 4.6}, we have
    \begin{equation}\label{eq.diffofchernclass}
        \sum_{C\subset \Sing(X)}\left(e_C-\frac{1}{g_C} \right)(-K_X\cdot C)= c_2(X)\cdot c_1(X)-\hat c_2(X)\cdot c_1(X).
    \end{equation}
    As $X$ is Gorenstein, we have $c_2(X)\cdot c_1(X)=24$ by the Riemann--Roch formula and $-K_X\cdot C\geq 1$ for any $C\subset \Sing(X)$. Combining with \cite{jiang-liu-liu}*{Lemma 4.5}, we have
    \begin{equation}\label{eq.ineq1}
        \sum_{C\subset \Sing(X)}\left(j_C-\frac{1}{j_C} \right)\leq \sum_{C\subset \Sing(X)}\left(e_C-\frac{1}{g_C} \right)(-K_X\cdot C).
    \end{equation}
    As $J \mid \lcm \{j_C\mid C\subset \Sing(X)\}$ by the definitions of $J$ and $j_C$, each $p_i^{a_i}$ divides at least one $j_C$ for some $C\subset \Sing(X)$. Hence by \cite{chen-jiang}*{Page 65, (2.2)}, 
    \begin{equation}\label{eq.ineq2}
        \sum_{i=1}^k\left(p_i^{a_i}-\frac{1}{p_i^{a_i}}\right)\leq \sum_{C\subset \Sing(X)}\left(j_C-\frac{1}{j_C} \right).
    \end{equation}
    On the other hand, let $f\colon Y\to X$ be a $\mathbb Q$-factorialization such that $K_Y=f^*K_X$. In particular, $Y$ is a $\mathbb Q$-factorial Gorenstein canonical weak Fano $3$-fold such that $q_{\mathbb Q}(Y)=q_{\mathbb Q}(X)\geq q\geq 3$. As $X$ is $\mathbb Q$-factorial in codimension 2,  we have $f_*\hat c_2(Y)=\hat c_2(X)$ by pushing forward the one cycle, and hence $\hat c_2(Y)\cdot c_1(Y)=\hat c_2(X)\cdot c_1(X)$ by the projection formula. Then Corollary \ref{cor.KMinq} implies that
    \begin{equation}\label{eq.ineq3}
         c_1(X)^3=c_1(Y)^3\leq 4\hat c_2(Y)\cdot c_1(Y)=4\hat c_2(X)\cdot c_1(X).
    \end{equation}
    The conclusion follows by combining \eqref{eq.diffofchernclass}--\eqref{eq.ineq3}. 
\end{proof}

\begin{rem}\label{rem.surfaceindex}
    Let $S$ be a del Pezzo surface with du Val singularities and $q_{\mathbb Q}(S)\geq 3$. Then as above, Corollary \ref{cor.KMinq} and \cite{jiang-liu-liu}*{(4.5), (4.6)} yield that
    \begin{equation}\label{eq.KMsurface}
        \sum_{p\in \Sing(S)}(j_p-\frac{1}{j_p})\leq \sum_{p\in \Sing(S)}(e_p-\frac{1}{g_p})\leq c_2(S)-\frac{c_1(S)^2}{4}=12-\frac{5}{4}c_1(S)^2,
    \end{equation}
    where $e_p, g_p,j_p$ are defined as in \cite{jiang-liu-liu}*{Definition 4.4} and the last equality is Noether's formula. These strong numerical constraints provide a new approach to the classification of del Pezzo surfaces with du Val singularities. In particular, it yields a quick proof of $q_{\mathbb Q}(S)\leq 6$ that does not rely on the classification in \cite{miyanishi-zhang}, thereby offering an alternative to \cite{wang}*{Proposition 3.3}.

    Indeed, after taking quasi-\'etale cover if necessary, we can assume that $-K_S\sim qA$ for some ample Weil divisor $A$ and some integer $q\geq q_{\mathbb Q}(S)$. By \cite{jiang-liu-liu}*{Theorem 4.2}, we have $J\mid q $ and $q^2 \mid Jc_1(S)^2$. So $q\leq c_1(S)^2\leq 48/5<10$ by \eqref{eq.KMsurface}. Moreover, if $q>6$, then $6<c_1(S)^2=J=q\leq 9$. But $J\mid \lcm \{j_p\}$, which contradicts \eqref{eq.KMsurface} by a straightforward computation.
\end{rem}

\begin{rem}
    It follows from Remark \ref{rem.pseff} that $\hat c_2(X)\cdot c_1(X)> 0$ for any $\mathbb Q$-factorial Gorenstein canonical Fano $3$-fold. Combining with \eqref{eq.diffofchernclass} and \eqref{eq.ineq1}, we have
    \[
        \sum_{C\subset \Sing(X)}(j_C-\frac{1}{j_C})< 24.
    \]
    This inequality plays a role in the classification of Gorenstein canonical Fano $3$-folds analogous to that of the inequality $\sum_{r\in\mathcal R_X}(r-\frac{1}{r})<24$ in the terminal case. In particular, it allows us to enumerate all but finitely many candidates for the basket $\{j_C\}$.
\end{rem}

\begin{lem}\label{lem.reduction}
    Let $X$ be a Gorenstein canonical Fano $3$-fold such that $-K_X\sim_{\mathbb Q}qA$ for some integer $q$. Then there exists a Gorenstein canonical Fano $3$-fold $Y$ such that $\Cl(Y)$ is torsion free and $-K_Y\sim qA_Y$ for some Weil divisor $A_Y$ on $Y$.
\end{lem}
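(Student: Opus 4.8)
The plan is to realize $Y$ as a quasi-\'etale cover of $X$ (that is, a finite surjective morphism that is \'etale in codimension one) trivializing the torsion of the divisor class group, the main point being that this can be achieved after finitely many steps. Recall from \S~\ref{subsec.fi} that $\Cl(X)$ is finitely generated, so its torsion subgroup is finite. If $\Cl(X)$ is already torsion free we are done: then $-K_X-qA$ is a torsion class, hence zero, so $-K_X\sim qA$ and we take $Y=X$, $A_Y=A$. Otherwise, choose a class in $\Cl(X)$ of exact order $m>1$; the associated cyclic cover is a connected quasi-\'etale Galois cover $\pi_1\colon X_1\to X$ of degree $m$. Since $\pi_1$ is \'etale in codimension one we have $K_{X_1}=\pi_1^*K_X$, so $K_{X_1}$ is Cartier, $-K_{X_1}=\pi_1^*(-K_X)$ is ample, and $X_1$ has canonical singularities by \cite{kollar-mori}*{Proposition~5.20}; thus $X_1$ is again a Gorenstein canonical Fano $3$-fold, with $\Cl(X_1)$ again finitely generated.

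Now iterate: as long as $\Cl(X_i)$ has nonzero torsion, form a quasi-\'etale cover $\pi_{i+1}\colon X_{i+1}\to X_i$ as above, of degree $m_i\ge 2$, with $X_{i+1}$ again a Gorenstein canonical Fano $3$-fold. One has $c_1(X_i)^3=(m_{i-1}\cdots m_0)\,c_1(X)^3$; moreover $c_1(X)^3$ is even and positive, hence $\ge 2$, while $c_1(X_i)^3\le 72$, both by Proposition~\ref{thm.degree}. Therefore $2^i\le m_{i-1}\cdots m_0\le 36$, so $i\le 5$, and the process must stop: after finitely many steps we reach a Gorenstein canonical Fano $3$-fold $Y\coloneq X_N$ with $\Cl(Y)$ torsion free, together with the composite quasi-\'etale cover $\pi\colon Y\to X$.

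It remains to descend the linear equivalence. Since $\pi$ is \'etale in codimension one, $K_Y=\pi^*K_X$, and the reflexive pullback $\pi^{[*]}\colon\Cl(X)\to\Cl(Y)$ is a group homomorphism; as $-K_X\sim_{\mathbb Q}qA$, the class $\tau\coloneq -K_X-qA$ is torsion, so $-K_Y-q\,\pi^{[*]}A=\pi^{[*]}\tau$ is a torsion class in the torsion-free group $\Cl(Y)$, hence zero. Thus $-K_Y\sim qA_Y$ with $A_Y\coloneq\pi^{[*]}A$, which gives the desired variety. The one genuine obstacle is the termination invoked above: a priori a torsion-trivializing cover may again have torsion in its class group, so one needs a bound on the degrees of quasi-\'etale covers of $X$; in dimension three this is handed to us for free by Proposition~\ref{thm.degree}, whereas in higher dimensions one would instead invoke the finiteness of the \'etale fundamental group of the smooth locus of a klt Fano variety.
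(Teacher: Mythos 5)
Your proof is correct and follows essentially the same route as the paper: iterate quasi-\'etale cyclic covers associated to torsion classes in $\Cl$, invoke \cite{kollar-mori}*{Proposition 5.20} to preserve canonicity (with Gorenstein and Fano preserved since pullback preserves Cartier and ample), and terminate by bounding the degree via Proposition~\ref{thm.degree}. Your explicit bound $i\le 5$ and the closing remark about higher dimensions are nice but do not change the substance; the argument matches the paper's.
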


\begin{proof}
    Taking quasi-\'etale covers induced by torsion elements in $\Cl(X)$ repeatedly, we will obtain the desired $Y$, which is still a Gorenstein canonical Fano $3$-fold by \cite{kollar-mori}*{Proposition 5.20}. See the proof of \cite{jiang-liu-liu}*{Lemma 5.1} for more details.
\end{proof}

Theorem \ref{thm.KMfor3fold} imposes strong constraints on the numerical invariants of Gorenstein canonical Fano $3$-folds. The following result specifically restricts the possible Fano indices.

\begin{thm}\label{thm.listofindex}
    Let $X$ be a Gorenstein canonical Fano $3$-fold. 
    Let $A$ be an ample Weil divisor such that $-K_X\sim qA$ for some positive integer $q$. If $q>22$, then 
    \[
        q\in\{24,26,28,30,36,40,42\}.
    \]
\end{thm}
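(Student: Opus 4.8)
The plan is to combine the numerical inequality \eqref{eq upper J1} of Theorem \ref{thm.KMfor3fold} with the divisibility relations of Theorem \ref{thm.degreeandindex} and the degree bound of Proposition \ref{thm.degree}. First I would reduce to a convenient situation: by Lemma \ref{lem.reduction} we may replace $X$ by a Gorenstein canonical Fano $3$-fold with torsion-free class group on which $-K_X\sim qA$, so that the invariant $J=J_A$ (the smallest positive integer with $JA$ Cartier in codimension $2$) is well defined and, by Theorem \ref{thm.degreeandindex}, satisfies $J\mid q$ and $q^2\mid J\,c_1(X)^3$. Since $c_1(X)^3$ is a positive even integer bounded above by $72$ (Proposition \ref{thm.degree}), the relation $q^2\mid J\,c_1(X)^3$ gives $q^2\le J\cdot 72$, hence $q^2/J\le 72$.

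Next I would feed this into \eqref{eq upper J1}. Write $J=p_1^{a_1}\cdots p_k^{a_k}$ and set $\Phi(J):=\sum_i\bigl(p_i^{a_i}-p_i^{-a_i}\bigr)$. Theorem \ref{thm.KMfor3fold} gives $\Phi(J)\le 24-\tfrac14 c_1(X)^3<24$. One now has two regimes depending on whether $J=q$ or $J<q$. If $J=q$ (which by $J\mid q$ is the "generic" case), then $q=J$ must itself satisfy $\Phi(q)<24$, and combined with $q^2\mid q\,c_1(X)^3$, i.e. $q\mid c_1(X)^3\le 72$, one gets a short finite list of integers $q\le 72$ with $\Phi(q)<24$; a direct check of which of these exceed $22$ should leave only values in $\{24,26,28,30,36,40,42\}$ (for instance $q=32$ is killed since $\Phi(32)=32-1/32>24$; $q=34=2\cdot 17$ gives $\Phi=2-1/2+17-1/17>18.4$, but $34\nmid c_1(X)^3\le 72$ since that would force $c_1(X)^3\in\{34,68\}$, both even but not multiples needed — this needs the parity/divisibility check; similarly one eliminates $38,44,\dots$). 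If instead $J<q$, then $J$ is a proper divisor of $q$, so $J\le q/2$; plugging into $q^2\le 72J\le 36q$ forces $q\le 36$, and then a finite search over the divisor pairs $(J,q)$ with $J\mid q$, $q^2\mid J\,c_1(X)^3$, $c_1(X)^3$ even and $\le 72$, and $\Phi(J)<24$, again restricted to $q>22$, should be carried out; the surviving $q$ lie in the stated set.

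The main obstacle I anticipate is the bookkeeping in the case $J<q$: one must be careful that $J$ only controls $A$ in codimension two, so the bound $q^2\mid J\,c_1(X)^3$ is the only multiplicativity available, and the finite enumeration of $(J, q, c_1(X)^3)$ triples — using that $c_1(X)^3$ is even, positive, at most $72$, and that the two equality cases $72$ correspond to weighted projective spaces with $q\in\{6,12\}$ (Proposition \ref{thm.degree}(2)), hence irrelevant here — has to be done without missing a case. A secondary subtlety is that \eqref{eq upper J1} bounds $\Phi(J)$, not $\Phi(q)$, so in the case $J<q$ one extracts information about $q$ only through the divisibility $J\mid q$ together with $q^2\mid J\,c_1(X)^3$; I would organize the argument by first listing all $J$ with $\Phi(J)<24$ (a finite, explicit set), then for each such $J$ listing the admissible $q$ and checking $q>22$. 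Everything else is a routine, if slightly tedious, finite verification.
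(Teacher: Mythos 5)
Your overall strategy mirrors the paper's: feed the divisibility relations of Theorem~\ref{thm.degreeandindex} and the degree bound of Proposition~\ref{thm.degree} into inequality~\eqref{eq upper J1} and enumerate. But there is a concrete gap in the enumeration as you've written it, centred on the strength of the numerical bound you actually use.

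You correctly state that Theorem~\ref{thm.KMfor3fold} gives $\Phi(J)\le 24-\tfrac14 c_1(X)^3$, but when you come to list candidates in the $J=q$ case you fall back to the weaker inequality $\Phi(q)<24$ together with $q\mid c_1(X)^3\le 72$, and you claim ``a direct check should leave only values in $\{24,26,28,30,36,40,42\}$.'' That is not true: among even $q>22$ with $q\mid c_1(X)^3\le 70$ and $\Phi(q)<24$ you would also retain $q=34,38,44,48,52,56,60,66,68,70$. What kills these is precisely the $-\tfrac14 c_1(X)^3$ term: once $q\ge 36$ and $J=q$, divisibility $q\mid c_1(X)^3\le 70$ forces $c_1(X)^3=q$, so the bound sharpens to $\Phi(q)\le 24-\tfrac{q}{4}$, and then each of the spurious values is excluded (e.g.\ $\Phi(44)\approx 14.7>24-11=13$, $\Phi(60)\approx 11.2>24-15=9$). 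The paper keeps this coupling visible by writing $c_1(X)^3=q\cdot\tfrac{q}{J}\cdot k$ with $k\in\mathbb Z_{>0}$, which is the cleaner way to enumerate $(J,q,c_1(X)^3)$ triples; your looser $q^2/J\le 72$ loses the arithmetic of $k$. Two smaller points: your dismissal of $q=34$ via ``$34\nmid c_1(X)^3\le 72$'' is incorrect ($c_1(X)^3$ could be $34$ or $68$; these are ruled out only by the sharpened bound, for example $\Phi(34)\approx 18.4>24-\tfrac{34}{4}=15.5$), and the appeal to Lemma~\ref{lem.reduction} is unnecessary here since the theorem's hypothesis already provides a genuine linear equivalence $-K_X\sim qA$. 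So: right tools, right case split ($J=q$ versus $J<q$, bounding $q$ in the latter case), but the enumeration as sketched is not tight enough; you need to carry the $-\tfrac14 c_1(X)^3$ term (equivalently the forced equality $c_1(X)^3=q$ when $q\ge36$) all the way through the check.
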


\begin{proof}
    Let $J$ be the smallest positive integer such that $JA$ is Cartier in codimension 2. Then $q^2\mid Jc_1(X)^3$ and $J\mid q$ by Theorem \ref{thm.degreeandindex}. In particular, $c_1(X)^3=q(\frac{q}{J})k$ for some positive integer $k$. By Theorem \ref{thm.degree}, we assume that $c_1(X)^3\leq 70$.
    
    If $q\geq 36$, then we must have $k=1$ and $J=q$. Hence $c_1(X)^3=J=q\leq 66$ is an even number by Theorem \ref{thm.degree} and \cite{jiang-liu2025}*{Theorem 1.1}. Then it follows from Theorem \ref{thm.KMfor3fold} and a straightforward computation that $q\in\{36,40,42\}$.

    If $22< q\leq 35$ and $q$ is odd, then $k=2$, $q=J$ and $c_1(X)^3=q(\frac{q}{J})k=2q$ as $c_1(X)^3\leq 70$ is even.  Such $q$ is excluded by Theorem \ref{thm.KMfor3fold}. 

    For $q=34$ and $32$, we have $c_1(X)^3=q=J$, $c_1(X)^3=2q=2J$, or $c_1(X)^3=2q=4J$. All of these possibilities are excluded by Theorem \ref{thm.KMfor3fold} as well.
\end{proof}

\begin{rem}\label{rem.RR}
    The cases $q=19$ and $17$ can be ruled out by repeating the previous computations. Alternatively, some of the cases can also be excluded using the Riemann--Roch formula for the Gorenstein canonical Fano $3$-folds \cite{jiang-liu2025}*{Theorem 4.9}, which states that for any integer $0<s<q$, 
    \begin{equation}\label{eq.RR}
        \frac{s^2}{2q^2}c_1(X)^3+\sum_{C\subset \Sing(X)}(-K_X\cdot C)c_C(sA)=h^0(X,\mathcal O_X(sA))-2\in\mathbb Z.
    \end{equation}
    With Theorem \ref{thm.KMfor3fold} and \eqref{eq.RR} at hand, we can rule out many cases by following the same line of argument as in \cite{jiang-liu2025}*{\S~7}. For instance, in the case $q=40$, we have $c_1(X)^3=q=J$, and hence Theorem \ref{thm.KMfor3fold} 
    implies that the only singular curves are $C_1$ of type $A_4$ and $C_2$ of type $A_7$, both satisfying $-K_X\cdot C_i=1$. Substituting into \eqref{eq.RR} yields that for any $0<s<40$, 
    \[
        \frac{s^2}{80}-\frac{\overline{sx}(5-\overline{sx})}{10}-\frac{\overline{sy}(8-\overline{sy})}{16}\in \mathbb Z
    \]
    for some integers $0\leq x<5, 0\leq y<8$. However, this fails for $s=5$. A similar argument applies to $q=36, 28, 26$.
\end{rem}

\begin{ex}\label{ex.wps}
    According to \cite{ds}*{Table 1}, there are 14 Gorenstein $3$-dimensional weighted projective spaces, whose $\mathbb Q$-Fano indices form the set
    \[
        \{4, 6, 8, 10, 12, 18,20,24,30,42\}.
    \]
    The case $q_{\mathbb Q}(X)=24$ is realized by $X\cong \mathbb P^1(1,3,8,12)$, the case $q_{\mathbb Q}(X)=30$ is realized by $X\cong \mathbb P^1(2,3,10,15)$, and the case $q_{\mathbb Q}(X)=42$ is realized by $X\cong \mathbb P^1(1,6,14,21)$. 
\end{ex}

\begin{cor}\label{cor.index}
    Let $X$ be a Gorenstein canonical Fano $3$-fold. Then 
    \[
        q_{\mathbb Q}(X)\in  \{m\in\mathbb Z_{>0}\mid m\leq 22\} \cup\{24,30,42\}\subset \{m\in\mathbb Z_{>0}\mid \varphi(m)\leq 20\} \setminus \{60\}.
    \]
\end{cor}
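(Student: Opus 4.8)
The plan is to read the bound off Theorem \ref{thm.listofindex} and the exclusions of Remark \ref{rem.RR}, after one standard reduction, and then to close with an elementary computation of Euler's function.

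First I would pass from $\mathbb Q$-linear to honest linear equivalence. Put $q\coloneq q_{\mathbb Q}(X)$, so that $-K_X\sim_{\mathbb Q}qA$ for some $A\in\Cl(X)$. By Lemma \ref{lem.reduction} there is a Gorenstein canonical Fano $3$-fold $Y$ with $\Cl(Y)$ torsion free and $-K_Y\sim qA_Y$ for a Weil divisor $A_Y$ \emph{and the same integer $q$}; since $Y$ is Fano and $q>0$, the divisor $A_Y$ is ample (indeed $qA_Y\sim -K_Y$ is Cartier and ample). Thus $(Y,A_Y,q)$ satisfies the hypotheses of Theorem \ref{thm.listofindex} and of Remark \ref{rem.RR}, and it suffices to control the integer $q=q_{\mathbb Q}(X)$.

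If $q\leq 22$ there is nothing to do. If $q>22$, Theorem \ref{thm.listofindex} gives $q\in\{24,26,28,30,36,40,42\}$, and the Riemann--Roch argument of Remark \ref{rem.RR} removes $q=26,28,36,40$, leaving $q\in\{24,30,42\}$. Combining the two cases, $q_{\mathbb Q}(X)=q\in\{m\in\mathbb Z_{>0}\mid m\leq 22\}\cup\{24,30,42\}$, which is the first inclusion; Example \ref{ex.wps} shows that $24,30,42$ are attained, so this half is sharp. For the second inclusion, $60$ does not belong to $\{m\in\mathbb Z_{>0}\mid m\leq 22\}\cup\{24,30,42\}$, and $\varphi(m)\leq 20$ holds for every element: for $m\leq 22$ because $\varphi(m)\leq m-1$ with equality only at primes and the largest prime $\leq 22$ is $19$, with $\varphi(19)=18$, while $\varphi(24)=\varphi(30)=8$ and $\varphi(42)=12$.

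I do not expect a serious obstacle here: all the substance is already in Corollary \ref{cor.KMinq} and its threefold consequences (Theorems \ref{thm.KMfor3fold} and \ref{thm.listofindex}) together with the Riemann--Roch exclusions of Remark \ref{rem.RR}. The one point that genuinely needs care is that Theorem \ref{thm.listofindex} is phrased for $-K_X\sim qA$ with $A$ an ample Weil divisor, whereas $q_{\mathbb Q}(X)$ is defined by $\mathbb Q$-linear equivalence; Lemma \ref{lem.reduction} bridges this gap by producing a model on which the index is realized by an honest linear equivalence without changing the integer $q$, so the conclusion transfers back to $X$ verbatim.
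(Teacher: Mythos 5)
Your proposal is correct and follows the same route as the paper: reduce via Lemma \ref{lem.reduction} to a model $Y$ on which $-K_Y\sim qA_Y$ for an (automatically ample, since $-K_Y$ is) Weil divisor, apply Theorem \ref{thm.listofindex} to land in $\{24,26,28,30,36,40,42\}$ when $q>22$, and use the Riemann--Roch exclusions of Remark \ref{rem.RR} to discard $26,28,36,40$. The paper's own proof is essentially this one-liner; you have merely spelled out the ampleness of $A_Y$ and the routine verification of the containment in $\{m\mid\varphi(m)\leq 20\}\setminus\{60\}$, which the paper leaves implicit.
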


\begin{proof}
    By Lemma \ref{lem.reduction}, there exists a Gorenstein canonical Fano $3$-fold $Y$ such that $-K_Y\sim q_{\mathbb Q}(X)A$ for some Weil divisor $A$ on $Y$. We may assume that $q_{\mathbb Q}(X)>22$. The conclusion then follows directly from Theorem \ref{thm.listofindex} and Remark \ref{rem.RR}.
\end{proof}

\section*{Acknowledgments} 
The author would like to thank Chen Jiang, Jie Liu and Wenhao Ou for helpful discussions and comments. He is especially grateful to Jie Liu for kindly permitting the use of his result in Proposition \ref{prop.smoothq=2}. The author is supported by the National Key Research and Development Program of China (No. 2023YFA1009801) in part and by NSFC (No. 12571048).  


\end{document}